\newtheorem{theorem}{Theorem}[section]
\newtheorem{proposition}[theorem]{Proposition}
\newtheorem{lemma}[theorem]{Lemma}
\newtheorem{corollary}[theorem]{Corollary}
\newtheorem{question}[theorem]{Question}
\newtheorem{definition}[theorem]{Definition}
\theoremstyle{plain}
\theoremstyle{remark}
\newtheorem{remark}[theorem]{Remark}
\newtheorem{example}[theorem]{Example}
\newcommand{\C}{{\mathbb C}}
\newcommand{\Q}{{\mathbb Q}}
\newcommand{\R}{{\mathbb R}}
\newcommand{\Z}{{\mathbb Z}}
\newcommand{\N}{{\mathbb N}}
\newcommand{\cC}{{\mathcal C}}
\newcommand{\cA}{{\mathcal A}}
\newcommand{\Qbar}{\bar{\Q}}
\DeclareMathOperator{\lcm}{lcm}
\DeclareMathOperator{\Norm}{N}
\newcommand{\bfs}{{\mathbf s}}
\newcommand{\cB}{\mathcal{B}}
\newcommand{\cF}{\mathcal{F}}
\newcommand{\cG}{\mathcal{G}}
\newcommand{\scrT}{\mathscr{T}}
\author{F\'elix Baril Boudreau}
\address{
F\'elix Baril Boudreau\\
Department of Mathematics and Computer Science\\
University of Lethbridge\\
4401 University Drive West
Lethbridge, T1K 3M4, Alberta, Canada
}
\email{felix.barilboudreau@uleth.ca}
\author{Erik Holmes}
\address{
Erik Holmes\\
Department of Mathematics and Statistics\\
University of Calgary\\
2500 University Drive NW\\
Calgary, T2N 1N4, Alberta, Canada
}
\email{erik.holmes@ucalgary.ca}
\author{Khoa D.~Nguyen}
\address{
Khoa D.~Nguyen \\
Department of Mathematics and Statistics\\
University of Calgary\\
2500 University Drive NW\\
Calgary, T2N 1N4, Alberta, Canada
}
\email{dangkhoa.nguyen@ucalgary.ca}
\thanks{F.~Baril Boudreau is partially supported by a PIMS postdoctoral fellowship at the University of Lethbridge. All the authors are partially supported by NSERC grant RGPIN-2018-03770 and CRC tier-2 research stipend 950-231716. They wish to thank Jason Bell and Tom Ward for helpful comments that improve the paper.}
\keywords{Artin-Mazur zeta function, Byszewski-Cornelissen conjecture, P\'olya-Carlson dichotomy}
\subjclass[2020]{Primary: 11J25, 13F25. Secondary: 37P35.}
\begin{document}
	\title[Adelic perturbation of rational functions]{Adelic perturbation of rational functions and applications}
	
	\date{July 2023}
	
	\begin{abstract}
 Let $\sum a_nx^n\in\bar{\mathbb{Q}}[[x]]$ be the power series representation of a rational function  and let $f:\ \{0,1,\ldots\}\rightarrow \bar{\mathbb{Q}}$ be a so-called almost quasi-polynomial. Under a necessary stability condition, we prove that $\sum f(n)a_nx^n$ satisfies the P\'olya-Carlson dichotomy: it is either a rational function or it cannot be extended analytically to a strictly larger domain than its disk of convergence. This latter property is much stronger than being transcendental. The first application and  motivation of our result is the solution of a conjecture by Byszewski-Cornelissen. This gives a  complete understanding of
 the analytic continuation behavior of the Artin-Mazur zeta function associated to a dynamical system on an abelian variety. Further applications include the solution of a conjecture by Bell-Miles-Ward and a significant case of an open problem by Royals-Ward.    
	\end{abstract}
	
	\maketitle

	\section{Introduction}\label{sec:intro}
        This first section is devoted to the application of our main result.  First we recall aspects of Byszewski-Cornelissen's work \cite{BC18_DO} concerning the dynamical zeta function of an endomorphism of an abelian variety and state a theorem that resolves their open problem (see Question~\ref{q:BC} and Theorem~\ref{thm:affirmative}). This gives a complete understanding of the analytic continuation behavior of such zeta functions. We then discuss two more results settling open problems by Bell-Miles-Ward \cite{BMW14_TA} and Royals-Ward \cite[Appendix]{BC18_DO}. The precise statements of these results and their proofs are given in the final section of the paper.  We refer the reader to the next section for the statement of our main result and related discussion since they require several further definitions.

        Let $\varphi:\ X\rightarrow X$ be a map on a set $X$. For each integer $k\geq 1$, let $N_k(\varphi)$ denote the number of fixed points of the $k$-th fold iterate $\varphi^k:=\varphi\circ\ldots\circ \varphi$ ($k$ times). Assume that $N_k(\varphi)$ is finite for every $k$. Then we can 
        define the dynamical or Artin-Mazur zeta function:
        $$\zeta_{\varphi}(x)=\exp\left(\sum_{k=1}^{\infty}\frac{N_k(\varphi)}{k}x^k\right).$$
		The role of $\zeta_{\varphi}$ as an important invariant of a dynamical system was  first recognized by Artin-Mazur \cite{AM65_OP} perhaps with motivation from the Weil conjectures (when $X$ is a variety over a finite field and $\varphi$ is the Frobenius then $\zeta_{\varphi}$ is
		the zeta function of $X$). This was further advocated by Smale \cite{Sma67_DF} and many other authors. As mentioned in \cite[p.~84]{AM65_OP} and \cite[p.~764]{Sma67_DF}, a fundamental problem is to determine whether $\zeta_{\varphi}(x)$ is rational, algebraic, or transcendental. Thanks to earlier results by various authors, it has been noted by Bell, Miles, and Ward \cite{BMW14_TA} that for many dynamical systems, the corresponding zeta function satisfies the P\'olya-Carlson dichotomy: it is either rational or it cannot be extended analytically beyond the disk of convergence. The latter property is much stronger than being transcendental. We refer the reader to \cite{BMW14_TA,BGNS23_AG} and their references for further details. 

        For the rest of this section, let $\varphi$ be an endomorphism of an abelian variety $X$ defined over an algebraically closed field $K$ such that $\ker(\varphi^k-1)$ is finite for every $k\geq 1$. As explained in \cite[Proposition~5.1]{BC18_DO}, if $g=\dim(X)$ then we have
        algebraic numbers $\xi_1,\ldots,\xi_{2g}$ none of which is a root of unity
        such that
        $$\deg(\varphi^k-1)=\prod_{i=1}^{2g}(\xi_i^k-1)$$
        for every $k\geq 1$.  By \cite[Proposition~5.2]{BC18_DO}, the dynamical zeta 
        function $\zeta_{\varphi}(x)$ has radius of convergence $1/\Lambda$        
   		where     
        $$\Lambda:=\prod_{i=1}^{2g}\max\{\vert\xi_i\vert,1\}.$$
        
        When $\varphi^k-1$ is separable for every $k$ (for example, this happens when $K$ has characteristic $0$), we have 
        $$N_k(\varphi)=\deg(\varphi^k-1)$$
        and therefore $\zeta_{\varphi}(x)$ is a rational function. In order to complete our understanding of $\zeta_{\varphi}(x)$ in terms of the P\'olya-Carlson dichotomy, Byszewski and Cornelissen ask the following \cite[Question~5.6]{BC18_DO}:

        \begin{question}[Byszewski-Cornelissen]\label{q:BC}
            Suppose that $\varphi^k-1$ is not separable for some $k\geq 1$, does $\zeta_{\varphi}(x)$ admit the circle of radius $1/\Lambda$ as a natural boundary?
        \end{question}

        Although they pose this as a question, it seems evident that they conjecture an affirmative answer by establishing two supporting results. First, they prove the weaker property that $\zeta_{\varphi}(x)$ is not 
        D-finite under the assumption of Question~\ref{q:BC} \cite[Theorem~4.3]{BC18_DO}. Second, if we assume the extra ``unique dominant root'' condition, equivalently $\vert \xi_i\vert\neq 1$ for $1\leq i\leq 2g$ \cite[Proposition~5.1]{BC18_DO}, then Question~\ref{q:BC} has an affirmative answer \cite[Theorem~5.5]{BC18_DO}. We obtain the following unconditionally as a consequence of our main result:

        \begin{theorem}\label{thm:affirmative}
            Question~\ref{q:BC} has an affirmative answer.
        \end{theorem}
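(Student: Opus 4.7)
The plan is to apply the paper's main P\'olya--Carlson dichotomy to the generating series $\sum_{k\geq 1}N_k(\varphi)x^k$ and then transfer the natural boundary to $\zeta_\varphi$ itself. Only the case $\chara(K)=p>0$ requires work, since otherwise every $\varphi^k-1$ is separable and $\zeta_\varphi$ is rational. So assume $\chara(K)=p>0$.

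Set $a_k:=\deg(\varphi^k-1)=\prod_{i=1}^{2g}(\xi_i^k-1)$. Expanding the product into the Newton-style sum $\sum_{S\subseteq\{1,\ldots,2g\}}(-1)^{2g-|S|}\prod_{i\in S}\xi_i^k$ and summing each geometric series shows that $\sum_{k\geq 1}a_k x^k$ is a rational function whose smallest pole has modulus exactly $1/\Lambda$. Since $\deg_i(\varphi^k-1)$ is a power of $p$, write $p^{e(k)}:=\deg_i(\varphi^k-1)$ and $f(k):=p^{-e(k)}\in\bar{\Q}$; then $N_k(\varphi)=a_k/p^{e(k)}=f(k)\,a_k$, so $\sum_k N_k(\varphi)x^k=\sum_k f(k)a_k x^k$ is exactly the type of series to which the main theorem applies.

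The main obstacle is verifying that $f$ is an almost quasi-polynomial satisfying the required stability hypothesis. For this I would analyze the action of $\varphi$ on the $p$-divisible group $X[p^\infty]$: its \'etale part contributes only to the separable degree, while the connected part (encoded by a Dieudonn\'e module whose slopes lie strictly between $0$ and $1$) controls $\deg_i(\varphi^k-1)$. This should express $e(k)$ as a finite combination of $p$-adic valuations $v_p(\xi_i^k-1)$ under suitable embeddings $\bar{\Q}\hookrightarrow\bar{\Q}_p$, and since no $\xi_i$ is a root of unity each such valuation is (eventually) quasi-polynomial in $k$ up to a bounded correction, yielding the almost-quasi-polynomial shape of $e(k)$ and hence of $f$. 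The stability hypothesis should then reduce to a non-degeneracy statement about the $\xi_i$ of maximal archimedean size, which I expect follows from the integrality of the characteristic polynomial of $\varphi$ on the Tate module together with standard Weil-type relations linking archimedean and $p$-adic absolute values.

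Once the hypotheses are verified, the main theorem yields the dichotomy for $\sum_k N_k(\varphi)x^k$. Rationality of this series would, upon termwise integration, express $\log\zeta_\varphi$ as a $\bar{\Q}$-linear combination of terms $\log(1-\alpha_j x)$ plus a polynomial, making $\zeta_\varphi=\exp(\log\zeta_\varphi)$ D-finite and contradicting \cite[Theorem~4.3]{BC18_DO}; hence $|x|=1/\Lambda$ is a natural boundary for $\sum_k N_k(\varphi)x^k$. Termwise antidifferentiation preserves natural boundaries (any continuation of the primitive, upon differentiation, would continue the integrand), so $\log\zeta_\varphi(x)=\sum_k N_k(\varphi)x^k/k$ has the same natural boundary; and since $\zeta_\varphi=\exp(\log\zeta_\varphi)$ is nowhere vanishing on its disk of convergence, any continuation of $\zeta_\varphi$ past a boundary point $x_0$ would, after factoring out an appropriate power of $x-x_0$, yield an analytic continuation of $\log\zeta_\varphi$ on a slit neighbourhood of $x_0$, contradicting the natural boundary just established. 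This proves Theorem~\ref{thm:affirmative}.
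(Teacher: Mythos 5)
Your overall architecture matches the paper's: write $N_k(\varphi)=f(k)a_k$ with $a_k=\deg(\varphi^k-1)=\prod_{i=1}^{2g}(\xi_i^k-1)$ and $f(k)=1/\deg_i(\varphi^k-1)$, apply Theorem~\ref{thm:main} to this perturbed rational series, and transfer the natural boundary to $\zeta_\varphi$. The two peripheral steps you do carry out are fine and are legitimate variants: ruling out the rational horn by noting it would make $\zeta_\varphi$ D-finite, contradicting \cite[Theorem~4.3]{BC18_DO}, is an acceptable alternative to the paper's route (which instead uses the ``moreover'' clause of Theorem~\ref{thm:main} plus Lemma~\ref{lem:q-p and almost q-c} to conclude $\deg_i(\varphi^k-1)$ takes finitely many values, contradicting \cite[Lemma~4.4]{BC18_DO}); and your antidifferentiation/exponentiation argument essentially reproves \cite[Lemma~1]{BMW14_TA}, which the paper simply cites.

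The genuine gap is exactly the step you yourself label ``the main obstacle'': you never verify that $f$ is an almost quasi-polynomial, nor that the series $\sum a_kx^k$ is stable; you only sketch a plan, and the sketch contains a false claim. The quantities $v_p(\xi_i^k-1)$ are \emph{not} ``eventually quasi-polynomial in $k$ up to a bounded correction'': when $\vert\xi_i\vert_v=1$ they are unbounded, growing with $v_p(k)$ along suitable progressions, and this genuinely non-quasi-polynomial behavior is precisely what produces the natural boundary --- if $e(k)$ had the shape you assert, $f$ could well be a quasi-polynomial and no contradiction would follow. The paper closes this step not through Dieudonn\'e theory but by quoting the structural result of \cite[Section~2]{BC18_DO}: $\deg_i(\varphi^k-1)=r_k\vert k\vert_p^{s_k}$ with $r_k\in\Q^*$, $s_k\in\Z_{\leq 0}$, and both sequences periodic of some period $L$; from this, $f(k)=1/(r_k\vert k\vert_p^{s_k})$ is an almost quasi-constant by the same device as Example~\ref{eg:|n|_p}, taking $d_i=Lp^i$. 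Likewise, the required stability of $\sum\prod_i(\xi_i^k-1)x^k$ has nothing to do with integrality of the characteristic polynomial or Weil-type relations: it is Example~\ref{eg:non-degenerate is stable} --- each factor $\xi_i^k-1$ is a stable sequence because $\xi_i$ is not a root of unity, and products of stable sequences are stable by a Skolem--Mahler--Lech argument. As written your text is a program rather than a proof; with the cited input from \cite{BC18_DO} and Example~\ref{eg:non-degenerate is stable} substituted for your sketch, the rest of your argument goes through.
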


        The proof of \cite[Theorem~5.5]{BC18_DO} relies on a theorem of Royals-Ward \cite[Appendix]{BC18_DO} which considers the power series
        $\displaystyle\sum a_nx^n\in\Q[[x]]$ of a rational function and its ``adelic perturbation'' 
        $\displaystyle\sum \vert a_n\vert_S a_n x^n$ where $S$ is a set of 
        non-archimedean places of $\Q$ and $\vert m\vert_S:=\prod_{\ell\in S}\vert m\vert_{\ell}$ for every $m\in \Q$. These power series are important since they are closely related to the dynamical zeta functions
        on compact abelian groups (see \cite{CEW97,Mil08_PP,BMW14_TA}). 
        Although the dynamics on such groups has been studied for more than a hundred years 
        with profound applications in combinatorics and number theory, basic questions such as 
        \cite[p.~653]{BMW14_TA} or the more notorious Furstenberg's $\times 2\times 3$-Conjecture \cite{Fur67}
        remain open. 
        Royals and Ward treat the case when
        $\vert S\vert<\infty$ and the $a_n$'s have a special form. They also state an open question concerning the P\'olya-Carlson dichotomy for such perturbation \cite[p.~2228]{BC18_DO}. 
        
        The second  consequence of our main result settles this question in the case $\vert S\vert<\infty$ under a  mild stability condition (see Corollary~\ref{cor:RW} and Remark~\ref{rem:RW}). Finally, the third consequence of our main result resolves an open problem by Bell-Miles-Ward \cite[p.~664]{BMW14_TA} concerning the dynamical zeta functions of automorphisms on compact abelian groups. As in \cite{BC18_DO}, the authors of \cite{BMW14_TA} could establish the desired dichotomy under an extra unique dominant root condition. They remark that although this condition is essential in their proof they still conjecture that the same result holds without the condition (see Theorem~\ref{thm:BMW} and Remark~\ref{rem:BMW}). With these various applications, it is evident that our results (Theorem~\ref{thm:main} and Theorem~\ref{thm:SML variant}) provide a systematic way to successfully attack the P\'olya-Carlson dichotomy problem for
        the above adelic perturbation $\displaystyle\sum \vert a_n\vert_S a_n x^n$ when $\vert S\vert<\infty$. To the best of our knowledge, there is no general approach to the problem when $S$ is infinite and this will be the subject of future work.

        We conclude this section with a brief discussion about the motivation and techniques used in this paper. Inspired by the results and techniques in \cite{BNZ20_DF,BNZ23_DF2}, Bell, Gunn, Nguyen, and Saunders prove a general criterion for the P\'olya-Carlson dichotomy \cite[Theorem~1.2]{BGNS23_AG}. This has new applications to the Artin-Mazur zeta functions of endomorphisms of positive characteristic tori in \cite[Theorem~4.2]{BNZ23_DF2} and of hyperbolic FAD-systems in \cite[Theorem~C]{BCH}. During the review process of \cite{BGNS23_AG}, a reviewer asked whether one could apply its results or methods to settle Question~\ref{q:BC}. Indeed we had been trying to do so even before the reviewer's suggestion. After several failed attempts, we realize that a different approach is needed in order to take advantage of the special feature that the original series
        $\sum a_n x^n$ is rational. Moreover, under a mild stability condition, the perturbation $\vert a_n\vert_S$ itself is essentially an example 
        of an almost quasi-polynomial. 
        This large family of almost quasi-polynomials allows us to extend the relevant power series to
		a full disk albeit with poles contributed by elements in $\Qbar(x)$ when the series
		does not admit the circle of the radius of convergence as a natural boundary (see Step 1 and Step 2 in the proof of Theorem~\ref{thm:main}).	        
        The notions of stability and almost quasi-polynomials, the main result, and other definitions will be given in the next section. The final ingredient is a very useful diophantine inequality (see Proposition~\ref{prop:KMN}) that is a consequence of the Schmidt's Subspace Theorem. This is used in not only the proof of the main theorem but also the proof that certain functions belong to the set of almost quasi-polynomials.

        \section{Perturbation by an almost quasi-polynomial}
        From now on, let $\N$ be the set of positive integers and let $\N_0=\N\cup\{0\}$. We fix an embedding of $\Qbar$ into $\C$ and use $\vert\cdot\vert$ to denote the usual absolute value on $\C$ as well as its restriction to $\Qbar$.

        For a number field $K$,
	let $M_K^\infty$ denote the set of archimedean places of $K$ and let 
	$M_K^0$ denote the set of finite places.  
  	We write $M_K=M_K^\infty\cup M_K^0$. For every place
  	$w\in M_K$, let $K_w$ denote the completion of 
  	$K$ with respect to $w$ and we let
  	$\delta_w=[K_w:\Q_v]$ where $v$ is the restriction of
  	$w$ to $\Q$. Following \cite[Chapter~1]{Voj87}, we normalize $\vert\cdot\vert_w$ on $K$ as follows. We always take
 $\vert\cdot \vert_v$ on $\Q$ to be either the usual Euclidean absolute value or the $p$-adic absolute value for some prime $p$. Then we let $\vert\cdot\vert_w$ be the absolute value on $K$ extending $\vert\cdot\vert_v$
 defined by
 $$\vert a\vert_w=\vert \Norm_{K_w/\Q_v}(a)\vert_v^{1/\delta_w}\quad \text{for $a\in K$}.$$

 \begin{remark}\label{rem:normalization}
     Our normalization of $\vert\cdot\vert_w$ is different from the one in   \cite{Mil08_PP,BMW14_TA} which uses the formula $\vert\Norm_{K_w/\Q_v}(a)\vert_v$.
     Both of these are different from the one in \cite{BG06_HI,KMN19_AA,BNZ20_DF}
     which uses the formula $\vert\Norm_{K_w/\Q_v}(a)\vert_v^{1/[K:\Q]}$. One of 
     the reasons for our choice is that we will use a Skolem-Mahler-Lech type argument to work with an analytic function $\Z_p\rightarrow K_w$ where $w$ restricts to $p\in M_\Q^0$ and it is more convenient to
     have $\vert\cdot\vert_w$ extend $\vert\cdot\vert_p$. 
 \end{remark}

We define the absolute logarithmic Weil height $h:\ \Qbar\rightarrow\R_{\geq 0}$. Let $a\in\Qbar$, choose a number field $K$ containing $a$, and define:
$$h(a)=\frac{1}{[K:\Q]}\sum_{w\in M_K}\log\max\{\vert a\vert_w^{\delta_w},1\}.$$
This is independent of the choice of $K$ \cite[Chapter~1]{Voj87}. For basic properties of $h$, see \cite{Voj87,BG06_HI,BNZ20_DF}. 

\begin{definition}\label{def:pol-exp seq}
    \begin{itemize}
        \item  [(i)] A sequence $(u_n)_{n\geq 0}$ of algebraic numbers is said to be a linear recurrence or polynomial-exponential sequence if there exist $r\in\N$, distinct non-zero $\alpha_1,\ldots,\alpha_r\in \Qbar$, and non-zero $P_1(x),\ldots,P_r(x)\in \Qbar[x]$ such that
    \begin{equation}\label{eq:def u_n}
    u_n=P_1(n)\alpha_1^n+\cdots+P_r(n)\alpha_r^n\quad\text{for every $n$.}
    \end{equation}
    The $\alpha_i$'s are called the characteristic roots of the given sequence. We say that $(u_n)_{n\geq 0}$ is defined over a number field $K$ if the $\alpha_i$'s and the coefficients of the $P_i$'s are in $K$.

        \item [(ii)] Let $r\in\N$. A tuple of non-zero algebraic numbers $(a_1,\ldots,a_r)$ is said to be non-degenerate if $a_i/a_j$ is not a root of unity for $1\leq i\neq j\leq r$.

        \item [(iii)] A linear recurrence sequence is called non-degenerate if its tuple of characteristic roots is non-degenerate.
    \end{itemize}
\end{definition}

\begin{remark}
    Definition~\ref{def:pol-exp seq} does not include the zero sequence. One can do so by allowing $r=0$ and the various statements will be either vacuously true or obviously false. It is simply our choice to ignore this trivial case.  
\end{remark}

\begin{definition}\label{def:v-stable}
    Let $(u_n)_{n\geq 0}$ be a linear recurrence sequence defined over a number field $K$ as in \eqref{eq:def u_n}. Let $v\in M_K$.

    Let $M=\max\{\vert \alpha_i\vert_v:\ 1\leq i\leq r\}$ and let $I=\{1\leq i\leq r:\ \vert\alpha_i\vert_v=M\}$. By the essential $v$-part of $(u_n)_{n\geq 0}$, we mean the sequence
    $$u_{v,n}:=\sum_{i\in I}P_i(n)\alpha_i^n.$$
    The sequence $(u_n)_{n\geq 0}$ is called $v$-stable if for every $a,b\in\N$ the
    sequence $(u_{v,an+b})_{n\geq 0}$ is not the zero sequence. The sequence $(u_n)_{n\geq 0}$ is called stable if it is $\vert\cdot\vert$-stable where, as always, $\vert\cdot\vert$ is the restriction of the usual absolute value of $\C$ on $K$.
\end{definition}

\begin{example}
    Let $\zeta=\exp(2\pi i/3)$, $K=\Q(\zeta)$, and
    $$u_n=(1/2)^n+(-1/2)^n+5^n+2\cdot(5\zeta)^n+3\cdot (5\zeta^2)^n.$$

    Let $v$ be a place of $K$ lying above the $2$-adic place. We have
    $u_{v,2n}=2\cdot (1/2)^{2n}$ and $u_{v,2n+1}=0$ for every $n$. Hence the sequence is not $v$-stable.

    To check whether it is stable, it suffices to consider the congruence classes mod $3$:
    $$u_{\vert\cdot\vert,3n}= 6\cdot 5^{3n},\quad u_{\vert\cdot\vert,3n+1}=(-\zeta-2)\cdot 5^{3n+1},\quad u_{\vert\cdot\vert,3n+2}=(\zeta-1)\cdot 5^{3n+2}.$$
    Therefore the given sequence is stable.
\end{example}

\begin{example}\label{eg:non-degenerate is stable}
    A non-degenerate linear recurrence sequence is $v$-stable for every $v$. Although it is not true in general that the essential $v$-part of a product is the product of the essential $v$-parts (for example, consider the sequences $2^n+(-2)^n+1$
    and $2^n-(-2)^n$ with $v=\vert\cdot\vert$), 
    it is true under the further condition that the involving sequences are $v$-stable. 
    Indeed, suppose $(u_n)_{n\geq 0}$ and $(t_n)_{n\geq 0}$ are $v$-stable sequences. 
    By the Skolem-Mahler-Lech theorem \cite{Sko34,Mah35,Lec53}, the $v$-stability condition is equivalent to
    $u_{v,n}t_{v,n}\neq 0$ for all sufficiently large $n$. Then it follows that the essential $v$-part of the sequence
    $(u_nt_n)_{n\geq 0}$ is $(u_{v,n}t_{v,n})_{n\geq 0}$ and therefore the sequence $(u_nv_n)_{n\geq 0}$ is $v$-stable. In particular, sequences of the form
    $$u_n=\prod_{i=1}^{m}(\xi_i^n-1)$$
    where $m\in\N$ and $\xi_1,\ldots,\xi_m$ are (not necessarily distinct) algebraic numbers none of which is a root of unity are $v$-stable for every $v$.
\end{example}

\begin{remark}
    We refer the reader to Corollary~\ref{cor:characterization} for another characterization of $v$-stable sequences.
\end{remark}

\begin{definition}
    Let $\sum a_nx^n\in\Qbar[[x]]$ be the power series representation of a rational function that is not a polynomial. Let $(u_n)_{n\geq 0}$ be the linear recurrence sequence defined over a number field $K$ such that $a_n=u_n$ for all sufficiently large $n$. Let $v\in M_K$. We say that the given power series is $v$-stable (respectively stable) if $(u_n)_{n\geq 0}$ is $v$-stable (respectively stable).  
\end{definition}

The final ingredient for our main result is the notion of an almost quasi-polynomial. While its definition is somewhat technical, it appears to be the right tool in solving the P\'olya-Carlson dichotomy problem for adelic perturbation of rational functions proposed by Royals-Ward \cite[Appendix]{BC18_DO}, at least when the set of places $S$ is finite. First, we start with the following:
\begin{definition}
    A function $f:\ \N_0\rightarrow \Qbar$ is said to be a quasi-polynomial if there exist $d\in\N$ and polynomials $P_0,\ldots,P_{d-1}\in\Qbar[x]$
    such that for every sufficiently large $n$, we have:
    $$f(n)=P_{j}(n)\quad \text{if $n\equiv j\bmod d$}.$$
    Moreover, if the $P_{j}$'s are constant polynomials, we call $f$ a quasi-constant.
\end{definition}

\begin{remark}
    This is slightly more general than the definition in \cite[Chapter~4.4]{Sta12_EC1} since we require that $f(n)$ be given by the $P_j$'s only when $n$ is sufficiently large.
\end{remark}

\begin{definition}\label{def:almost q-p}
    A function $f:\ \N_0\rightarrow \Qbar$ is said to be an almost quasi-polynomial if the following three properties hold:
    \begin{itemize}
        \item [(P1)] There exists a number field $K$ such that $f(n)\in K$ for every $n$.
        \item [(P2)] $h(f(n))=o(n)$ as $n\to\infty$.
        \item [(P3)] There exist an increasing sequence $d_1<d_2<\ldots$ of positive integers, a ``good'' set of congruence classes $\cG_i\subseteq\{0,\ldots,d_i-1\}$ for each $i\geq 1$, a polynomial $P_{i,j}(x)\in\Qbar[x]$ for each $i\geq 1$ and $j\in\cG_i$ such that:
            \begin{itemize}
                \item [(P3a)] $f(n)=P_{i,j}(n)$ for all sufficiently large $n$ such that  $n\equiv j\bmod d_i$ and
                \item [(P3b)] $\displaystyle\lim_{i\to\infty} \frac{\vert \cG_i\vert}{d_i}=1$.
            \end{itemize}
    \end{itemize}
	Moreover, if the $P_{i,j}$'s in (P3) are constant polynomials, we say that $f$ is an almost quasi-constant.
\end{definition}

\begin{remark}
    There is an extra flexibility in (P3a): the ``sufficient largeness'' condition on $n$ need not be uniform in $i$. More explicitly, we have $C_i\geq 0$ for each $i\geq 1$ such that $f(n)=P_{i,j}(n)$ for $n\geq C_i$ and $n\equiv j\bmod d_i$; there is no further requirement (for example, uniform boundedness) on the $C_i$'s as $i$ varies. Property (P2)
    is a growth condition on $h(f(n))$. The motivation for our terminology comes from (P3): as $i\to\infty$, for \emph{almost all} congruence classes modulo $d_i$, the function $f(n)$ when $n$ is sufficiently large is given by a polynomial on each congruence class.   
\end{remark}

\begin{example}\label{eg:|n|_p}
    Obviously every quasi-polynomial (respectively quasi-constant) is an almost quasi-polynomial (respectively almost quasi-constant). Perhaps the simplest example of an almost quasi-constant that is not a quasi-polynomial is $f(n)=\vert n\vert_p$ where $p$ is a given prime number. We may take $d_i=p^i$ and $\cG_i=\{1,\ldots,d_i-1\}$ (i.e. the non-zero congruence classes modulo $d_i$) then use the fact that $f(n)$ is constant on each congruence class $j\bmod d_i$ for $j\in\cG_i$. We refer the reader to Remark~\ref{rem:necessary} and Theorem~\ref{thm:SML variant}
    for many more examples. We leave it as an exercise to prove that sums and products of almost quasi-polynomials (respectively almost quasi-constants) are still almost quasi-polynomials (respectively almost quasi-constants). Hence the almost quasi-constants form a $\Qbar$-subalgebra of the almost quasi-polynomials.
    While the set of quasi-polynomials is countable, Remark~\ref{rem:necessary} 
    gives uncountably many almost quasi-constants. Therefore the algebra of almost quasi-constants cannot be generated by the countably many functions of the form $\vert n\vert_p$ and, more generally, by the countably many functions in Theorem~\ref{thm:SML variant}.
\end{example}

Our main result is the following:
\begin{theorem}\label{thm:main}
    Let $\displaystyle\sum_{n=0}^{\infty} a_nx^n\in\Qbar[[x]]$ be the power series representation of a rational function that is not a polynomial. Suppose that this series is stable. Let $R\in (0,\infty)$ be the radius of convergence. 
    Let $f:\ \N_0\rightarrow\Qbar$ be an almost quasi-polynomial. Then the power series $\displaystyle\sum_{n=0}^{\infty} f(n)a_nx^n$ either
    \begin{itemize}
        \item [(i)] admits the circle of radius $R$ as a natural boundary, or
        \item [(ii)] represents a rational function.
    \end{itemize}
    Moreover, (ii) happens if and only if $f$ is a quasi-polynomial.
\end{theorem}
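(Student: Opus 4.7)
The plan is to prove both the main dichotomy and the ``moreover'' clause in one argument. The easier implication of the moreover produces many rational instances: if $f$ is a quasi-polynomial with period $d$ and associated polynomials $Q_0,\ldots,Q_{d-1}$, then because $(a_n)$ agrees for large $n$ with a polynomial-exponential sequence $u_n=\sum_{k=1}^{r} P_k(n)\alpha_k^n$, the sub-series $\sum_{n\equiv j \bmod d} f(n) a_n x^n = \sum_{n\equiv j \bmod d} Q_j(n) u_n x^n$ is a rational function of $x$ for each $j$; summing over $j$ shows $g(x):=\sum f(n)a_n x^n$ is rational.

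For the main direction, assume $g$ does \emph{not} admit $|x|=R$ as a natural boundary. For each $i$, use property (P3) to decompose
\[
g(x) \;=\; R_i(x) + s_i(x),
\]
where $R_i(x) = \sum_{j\in\cG_i} \sum_{n\equiv j \bmod d_i,\,n\ \text{large}} P_{i,j}(n) u_n x^n$ is the contribution from the good residue classes and $s_i$ gathers the bad classes $j\notin\cG_i$ together with finitely many low-order corrections. Each inner sum in $R_i$ is a rational function, with poles only at points $\zeta/\alpha_k$ for $d_i$-th roots of unity $\zeta$; hence $R_i$ is rational, and consequently $s_i$ also admits an analytic continuation past the circle $|x|=R$. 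The crux is that the support of $s_i$ has relative density $1-|\cG_i|/d_i\to 0$ by (P3b), while (P2) forces the heights $h(f(n))$ to grow sub-exponentially, so the non-zero coefficients of $s_i$ are simultaneously sparse and of small height.

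Now invoke the Schmidt Subspace Theorem-type inequality of Proposition~\ref{prop:KMN}, where the stability hypothesis on $(u_n)$ guarantees that the essential $|\cdot|$-part of $u_n$ does not vanish on any arithmetic sub-progression, so the dominant exponential behavior of $P_{i,j}(n) u_n$ is not destroyed by cancellation. For $i$ sufficiently large, the combination of sparse support, sub-exponential height, and analytic continuation past $|x|=R$ becomes incompatible unless $s_i$ is a polynomial. In that case $g = R_i + s_i$ is rational; moreover, $f(n)=0$ for $n$ large in each bad residue class modulo $d_i$, while $f(n)=P_{i,j}(n)$ on the good classes, so $f$ coincides with a quasi-polynomial of period $d_i$ for $n$ large. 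Thus $f$ is a quasi-polynomial, closing the ``only if'' direction of the moreover.

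The main obstacle I anticipate is the diophantine passage from sparse, density-zero support to polynomial rigidity: an analytic function with algebraic Taylor coefficients supported on a density-zero set of indices can a priori have wild boundary behavior, and one must carefully exploit the height bound (P2), the stability hypothesis, and Proposition~\ref{prop:KMN} to force collapse to a polynomial. Matching the growth of the ``good'' rational piece against the sparse continuation of the ``bad'' piece on the boundary circle is where the real work lies.
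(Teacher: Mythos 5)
Your easy direction (quasi-polynomial $\Rightarrow$ rational) is fine and matches the paper. But the main direction has a genuine gap at exactly the point you flag as ``where the real work lies,'' and the missing idea is not a technical refinement but the core of the argument. Your decomposition $g=R_i+s_i$ only tells you that $s_i$ continues analytically (modulo the rational $R_i$) past the \emph{one} boundary point where $g$ is assumed to continue; the hypothesis is not that $g$ continues past the whole circle. From continuation past a single boundary point, sparse support and the height bound (P2) do not force $s_i$ to be a polynomial: for each fixed $i$ the support of $s_i$ is a union of full arithmetic progressions modulo $d_i$ (positive density $b(i)/d_i$, not a Fabry-type gap set), and a series supported on such progressions can continue past one point, or even be rational, without being a polynomial --- indeed when $f$ itself is a quasi-polynomial the bad-class piece $s_i$ is typically rational but not polynomial, so your intended conclusion is too strong and your derivation ``$f(n)=0$ for large $n$ in each bad class'' does not follow. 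Moreover the sets $\cB_i$ for different $i$ need not be compatible, so one cannot pass to a limit and treat the union of bad classes as a single density-zero set. Finally, Proposition~\ref{prop:KMN} is a statement about exponential sums $b_1\alpha_1^n+\cdots+b_r\alpha_r^n$ being small; to apply it one needs a coefficient-wise inequality of the form $\vert f(n)a_n-\tilde a_n\vert<B'^n$ against the coefficients of some rational function, and continuation past one boundary point gives no such inequality.

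The paper supplies precisely the mechanism you are missing, in two moves that your proposal does not contain. First, instead of splitting off the bad classes additively, it multiplies the coefficients by $(1-\zeta_1^{-m_1}\zeta_1^n)\cdots(1-\zeta_{b(i)}^{-m_{b(i)}}\zeta_{b(i)}^n)$, which annihilates the bad classes and produces a nontrivial relation $\sum_J c_J F(\zeta_J x)\in\Qbar(x)$ among \emph{rotates} of $F$ by $d_i$-th roots of unity. Second, this relation is used as a propagation device: taking $\zeta_j=e^{2\pi i/d_k}$ with $k$ large, the known continuation of $F$ on an arc is transported, modulo rational functions, to an adjacent arc of angular width about $2\pi/d_k$, and since only $b(k)$ rotations occur with $b(k)/d_k\to 0$ this sweeps around the entire circle, yielding $F=F_1+(\text{analytic on }D(0,R'))$ with $R'>R$ and $F_1\in\Qbar(x)$ with poles on $\vert x\vert=R$. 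Only at that stage does the coefficient inequality needed for Proposition~\ref{prop:KMN} exist, and the stability hypothesis enters concretely: on each progression $n=\tilde Lm+b$ it guarantees the dominant part of $a_n$ is a nonzero non-degenerate exponential sum, so the Subspace Theorem forces $f(\tilde Lm+b)$ to agree with a rational function of $m$, and (P3) then upgrades that rational function to a polynomial, i.e.\ $f$ is a quasi-polynomial. Without the rotation-and-propagation step your argument cannot get off the ground, so as written the proposal does not prove Theorem~\ref{thm:main}.
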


\begin{remark}\label{rem:radius}
    When $f(n)=0$ for all sufficiently large $n$, case (ii) is obvious since
    $\displaystyle\sum_{n=0}^{\infty} f(n)a_nx^n$ is a polynomial. Otherwise, the proof of Theorem~\ref{thm:main} shows that $\displaystyle\sum_{n=0}^{\infty} f(n)a_nx^n$ has radius of convergence $R$.
\end{remark}

\begin{remark}\label{rem:necessary}
    We explain why the stability assumption is necessary for the conclusion of Theorem~\ref{thm:main}. Consider $a_n=2^n+(-2)^n+1$ which is not stable. Let $\bfs=(s_k)_{k\geq 1}$ be a sequence of non-zero integers such that:
    $$s_1=1\quad \text{and}\quad \lim_{k\to\infty}\frac{\log\vert s_k\vert}{2^k}=0.$$
    Consider $f_{\bfs}:\ \N_0\rightarrow \Qbar$ defined by the following (infinitely many) cases:
    \begin{itemize}
        \item $f(n)=s_1$ if $n\equiv 0\bmod 2$,
        \item $f(n)=s_2$ if $n\equiv 1\bmod 4$,
        \item $f(n)=s_3$ if $n\equiv 3\bmod 8,\ldots$
        \item $f(n)=s_k$ if $n\equiv 2^{k-1}-1\bmod 2^k,\ldots$
    \end{itemize}
    The first $k$ lines define $f_{\bfs}(n)$ for $n$ belonging to all except the congruence class of $-1$ modulo $2^k$. These also constitute all except the congruence classes of $2^{k}-1$ and $-1$ modulo $2^{k+1}$. Then the $(k+1)$-th line defines $f_{\bfs}(n)$ for $n\equiv 2^{k}-1\bmod 2^{k+1}$.  Therefore $f_{\bfs}$ is well-defined on $\N_0$ (even on the bigger domain $\Z\setminus\{-1\}$). We now prove that $f_{\bfs}$ is an almost quasi-constant. Properties (P1) and (P3) hold: we may take $d_i=2^i$, then $f_{\bfs}$ is constant on each congruence class modulo $d_i$ except the one given by $-1\bmod d_i$. To verify (P2), let $n\in\N_0$, then let $k$ be the smallest positive integer such that $n<2^k-1$ so that $f_{\bfs}(n)$ is defined within the first $k$ lines. Therefore:
    $$h(f_{\bfs}(n))=\log \vert f_{\bfs}(n)\vert\leq \max_{1\leq i\leq k} \log\vert s_i\vert=o(2^k)=o(n).$$
    We have:
    \begin{align*}
    \sum_{n=0}^{\infty}f_{\bfs}(n)a_nx^n&=\sum_{\text{even}\ n} (2\cdot 2^n+1)x^n+\sum_{\text{odd}\ n} f_{\bfs}(n)x^n\\
    &=\frac{2}{1-4x^2}+\frac{1}{1-x^2}+\sum_{\text{odd}\ n} f_{\bfs}(n)x^n
    \end{align*}
    has radius of convergence $1/2$ while $\displaystyle \sum_{\text{odd}\ n} f_{\bfs}(n)x^n$ has radius of convergence $1$ (since $1\leq \vert f_{\bfs}(n)\vert=e^{o(n)}$). Therefore $\displaystyle\sum_{n=0}^{\infty}f_{\bfs}(n)a_nx^n$ does not admit the circle of radius $1/2$ as a natural boundary.
    On the other hand, there are uncountably many choices for $\bfs$ while $\displaystyle\sum_{n=0}^{\infty}f_{\bfs}(n)a_nx^n$
    represents a rational function for only countably many $\bfs$. Hence the conclusion of Theorem~\ref{thm:main} is false for uncountably many almost quasi-constants $f_{\bfs}$.
\end{remark}

    \section{Proof of Theorem~\ref{thm:main} and Theorem~\ref{thm:affirmative}}
    The following application of the Subspace Theorem \cite{Sch70_SA,ES02_AQ} will be used several times. 
    \begin{proposition}\label{prop:KMN}
        Let $\ell:\ \N\rightarrow\R_{\geq 0}$ be a sublinear function meaning
        $\displaystyle\lim_{n\to\infty}\frac{\ell(n)}{n}=0$. Let $r\in\N$, let $K$ be a number field, and let $(\alpha_1,\ldots,\alpha_r)$ be a non-degenerate tuple of elements of $K^*$. Let $v\in M_K$ and let $0<B<\displaystyle\max_{1\leq i\leq r}\vert \alpha_i\vert_v$.
        Then there are only finitely many tuples $(n,b_1,\ldots,b_r)\in \N\times (K^*)^r$ such that:
        \begin{equation}\label{eq:KMN}
        \vert b_1\alpha_1^n+\cdots+b_r\alpha_r^n\vert_v\leq B^n\quad\text{and}\quad\max_{1\leq i\leq r} h(b_i)\leq \ell(n).
        \end{equation}
    \end{proposition}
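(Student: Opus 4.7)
I would argue by induction on $r$, using the Schmidt Subspace Theorem for the inductive step, and proceed by contradiction: suppose there is an infinite sequence of tuples $(n_m, b_{1,m}, \ldots, b_{r,m})$ satisfying \eqref{eq:KMN} with $n_m \to \infty$. For the base case $r = 1$ the hypothesis reads $|b_{1,m}|_v \cdot |\alpha_1|_v^{n_m} \leq B^{n_m}$ with $|\alpha_1|_v > B$, so $\log|b_{1,m}|_v \leq -n_m \log(|\alpha_1|_v/B)$. Combined with the standard local--global inequality $\log|b_{1,m}|_v \geq -C\, h(b_{1,m}) \geq -o(n_m)$, this is impossible for $m$ large.

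For the inductive step $r \geq 2$, enlarge a finite set $S \subset M_K$ to contain $v$, all archimedean places, and every place at which some $\alpha_i$ is a non-unit, so that the $\alpha_i$ become $S$-units. Set $M := \max_i |\alpha_i|_v > B$ and consider the projective points $\mathbf{y}_m := [b_{1,m}\alpha_1^{n_m} : \cdots : b_{r,m}\alpha_r^{n_m}] \in \mathbb{P}^{r-1}(K)$. At the place $v$ take the $r$ linearly independent forms $L_{v,1}(\mathbf{x}) = x_1 + \cdots + x_r$ and $L_{v,i}(\mathbf{x}) = x_i$ for $i \geq 2$; at each other $w \in S$ use the coordinate forms $L_{w,i}(\mathbf{x}) = x_i$. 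Using the hypothesis $|L_{v,1}(\mathbf{y}_m)|_v \leq B^{n_m}$, the product formula applied to the $S$-units $\alpha_i$, and the height bound $h(b_{i,m}) \leq \ell(n_m) = o(n_m)$, a direct computation yields
$$\prod_{w\in S}\prod_{i=1}^{r}\frac{|L_{w,i}(\mathbf{y}_m)|_w}{\|\mathbf{y}_m\|_w}\;\leq\;\left(\frac{B}{M}\right)^{n_m} e^{o(n_m)}$$
while $H(\mathbf{y}_m) \leq e^{O(n_m)}$. The Schmidt Subspace Theorem therefore forces the $\mathbf{y}_m$ into a finite union of proper linear subspaces of $\mathbb{P}^{r-1}(K)$.

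By pigeonhole one obtains a fixed nontrivial relation $\sum_i c_i\, b_{i,m}\alpha_i^{n_m} = 0$ along an infinite subsequence. Relabeling so that $c_r \neq 0$ and substituting $b_{r,m}\alpha_r^{n_m} = -\sum_{i<r}(c_i/c_r)\, b_{i,m}\alpha_i^{n_m}$ back into the original sum rewrites the inequality as
$$\Bigl|\sum_{i \in J} b_{i,m}'\, \alpha_i^{n_m}\Bigr|_v \leq B^{n_m},\quad b_{i,m}' := b_{i,m}\bigl(1 - c_i/c_r\bigr),\quad J := \{i < r : c_i \neq c_r\},$$
with $h(b_{i,m}') \leq \ell(n_m) + O(1)$ and $(\alpha_i)_{i \in J}$ still non-degenerate. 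If some $i_0 \in J$ satisfies $|\alpha_{i_0}|_v = M$, the inductive hypothesis applied to the shorter tuple (with the same $v$ and $B$) yields the contradiction. Otherwise $c_i = c_r$ for every index with $|\alpha_i|_v = M$, and one iterates the whole argument: each round strictly shrinks the set of maximum-modulus indices at $v$, so after finitely many rounds either the previous case applies or one descends to $r = 1$.

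The principal technical obstacle is the careful verification of the Subspace-Theorem inequality for the varying projective point $\mathbf{y}_m$: one has to balance the exponential gain $(B/M)^{n_m}$ coming from the single place $v$ against $H(\mathbf{y}_m) \leq e^{O(n_m)}$, relying crucially on $\ell(n) = o(n)$ to absorb the varying coefficients $b_{i,m}$. A secondary delicate point is the final reduction step, where one must ensure that the tuple produced by the substitution genuinely lies in the scope of the inductive hypothesis — in particular that non-degeneracy is inherited and that a maximum-modulus index is still present — which is precisely what forces the iterative shrinking described above.
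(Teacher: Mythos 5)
Your Subspace--Theorem step is essentially a reconstruction of what the paper simply cites as \cite[Proposition~2.3]{KMN19_AA}: with your choice of forms one does get, after pigeonholing, a fixed nontrivial relation $c_1b_{1,m}\alpha_1^{n_m}+\cdots+c_rb_{r,m}\alpha_r^{n_m}=0$ along an infinite subsequence. Three small repairs are needed there: you must invoke Northcott to rule out infinitely many tuples sharing a single $n$ (so that $n_m\to\infty$ is legitimate); tuples with $\sum_i b_{i,m}\alpha_i^{n_m}=0$ should be placed directly in the hyperplane $x_1+\cdots+x_r=0$ since the form $L_{v,1}$ vanishes there; and your displayed bound $(B/M)^{n_m}e^{o(n_m)}$ is not by itself the hypothesis of the Subspace Theorem --- you must retain the factor of roughly $H(\mathbf{y}_m)^{-r}e^{o(n_m)}$ produced by the coordinate forms together with the product formula for the $S$-units $\alpha_i$, and then choose $\epsilon$ small compared with $\log(M/B)$ and the heights of the $\alpha_i$. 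These are all repairable and consistent with your ``balancing'' remark.

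The genuine gap is the endgame. To re-enter either the inductive hypothesis or another Subspace step after eliminating the index $r$, you need $\max_{i\in J}\vert\alpha_i\vert_v>B$, and your fallback (``otherwise\dots iterate the whole argument; each round strictly shrinks the set of maximum-modulus indices'') does not provide it. In the bad case the relation carries equal coefficients on \emph{all} $v$-dominant indices --- for instance the relation may be exactly $\sum_{i\in I_M}b_{i,m}\alpha_i^{n_m}=0$ with $I_M$ the set of indices of maximal $\vert\cdot\vert_v$, or the whole sum may vanish so that $J=\emptyset$ --- and then the reduced inequality involves only indices with $\vert\alpha_i\vert_v\le B$, where it can genuinely hold for all $n$; there is nothing to iterate, and ``descending to $r=1$'' does not help because your base case requires $\vert\alpha_1\vert_v>B$. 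What is missing is a separate finiteness statement for vanishing sums: for a non-degenerate tuple there are only finitely many $(n,b_1,\ldots,b_s)$ with $\sum_i b_i\alpha_i^n=0$ and $h(b_i)\le\ell(n)$. This is precisely the second ingredient the paper cites, \cite[Proposition~2.2]{KMN19_AA}, and it is where non-degeneracy really enters via Kronecker's theorem ($h(\alpha_i/\alpha_j)>0$ since $\alpha_i/\alpha_j$ is not a root of unity): the two-term case is a direct height computation, and the general case needs an induction intertwined with the main statement (using a place at which not all the $\vert\alpha_i\vert$ coincide), or another Subspace application. Without this ingredient your induction does not close.
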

    \begin{proof}
        This is from \cite[Section~2]{KMN19_AA} following earlier work of Evertse \cite{Eve84_OS} and Corvaja-Zannier \cite{CZ04_OT}. By \cite[Proposition~2.3]{KMN19_AA}, each $(n,b_1,\ldots,b_r)$ where \eqref{eq:KMN} holds satisfies at least one of finitely many non-trivial relations of the form:
        $$c_1b_1\alpha_1^n+\cdots+c_rb_r\alpha_r^n=0.$$
        Then we apply \cite[Proposition~2.2]{KMN19_AA} to get the desired conclusion.
    \end{proof}

    \begin{corollary}\label{cor:characterization}
        Let $(u_n)_{n\geq 0}$ be a linear recurrence sequence defined over a number field $K$ with
        $$u_n=P_1(n)\alpha_1^n+\cdots+P_r(n)\alpha_r^n$$
        as in \eqref{eq:def u_n}. Let $v\in M_K$, $M=\displaystyle\max_{1\leq i\leq r} \vert\alpha_i\vert_v$, and the essential $v$-part $(u_{v,n})_{n\geq 0}$
        be as in Definition~\ref{def:v-stable}. The following are equivalent:
        \begin{itemize}
            \item [(i)] The sequence $(u_n)_{n\geq 0}$ is $v$-stable.
            \item [(ii)] For every $B\in (0,M)$, we have $\vert u_n\vert_v\geq B^n$
            for all sufficiently large $n$.
            \item [(iii)] For every $B\in (0,M)$, we have
            $\vert u_{v,n}\vert_v\geq B^n$ for all sufficiently large $n$.
        \end{itemize}
    \end{corollary}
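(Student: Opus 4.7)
The plan is to establish the cycle of implications by proving (ii) $\Leftrightarrow$ (iii), (iii) $\Rightarrow$ (i), and the substantive (i) $\Rightarrow$ (iii), with Proposition~\ref{prop:KMN} serving as the key diophantine input for the last. The first two pieces are routine. Decomposing $u_n = u_{v, n} + r_n$ with $r_n := \sum_{i \notin I} P_i(n) \alpha_i^n$, and setting $M' := \max_{i \notin I} |\alpha_i|_v < M$ (with $M' = 0$ and $r_n \equiv 0$ when $I = \{1, \ldots, r\}$), one has $|r_n|_v \leq (M' + \epsilon)^n$ for every $\epsilon > 0$ and all large $n$; so a lower bound on $|u_n|_v$ or on $|u_{v,n}|_v$ at a threshold $B_1 \in (\max(B_0, M'), M)$ can be transferred to the other at the threshold $B_0$ with only negligible loss, yielding (ii) $\Leftrightarrow$ (iii). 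For (iii) $\Rightarrow$ (i), the bound $|u_{v,n}|_v \geq B^n > 0$ for all large $n$ forces $u_{v, an+b} \neq 0$ for large $n$ and any $a, b \in \N$, so $(u_{v, an+b})_{n \geq 0}$ is never the zero sequence.

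For the main implication (i) $\Rightarrow$ (iii), the first step is to pass to a genuinely non-degenerate polynomial-exponential representation. Group the indices in $I$ by the equivalence relation $\sim$ defined by $i \sim j$ when $\alpha_i / \alpha_j$ is a root of unity, with equivalence classes $C_1, \ldots, C_s$ and chosen representatives $\beta_1, \ldots, \beta_s$; thus $\alpha_i = \beta_l \zeta_i$ for $i \in C_l$ with $\zeta_i$ a root of unity. Choose $N \in \N$ with $\zeta_i^N = 1$ for all $i \in I$, and on each progression $n = Nm + b$ rewrite
$$u_{v, Nm + b} = \sum_{l=1}^{s} \beta_l^b \, (\beta_l^N)^m \, Q_{l,b}(m), \qquad Q_{l,b}(m) := \sum_{i \in C_l} P_i(Nm + b) \zeta_i^b \in K[m].$$
The tuple $(\beta_1^N, \ldots, \beta_s^N)$ is non-degenerate by construction, and each satisfies $|\beta_l^N|_v = M^N$. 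The $v$-stability assumption forces the left-hand side to be a non-zero sequence in $m$, so at least one $Q_{l,b}$ is non-zero; let $S = S(b)$ index these. Then for all sufficiently large $m$ the scalars $c_l(m) := Q_{l,b}(m)\, \beta_l^b$ lie in $K^*$ and have sublinear height $h(c_l(m)) = O(\log m)$. Given $B_0 \in (0, M)$, fix $B_1 \in (B_0^N, M^N)$ and apply Proposition~\ref{prop:KMN} to the non-degenerate tuple $(\beta_l^N)_{l \in S}$ with threshold $B_1$ and any sublinear dominator $\ell(m)$ of the heights $h(c_l(m))$; the conclusion is that only finitely many $m$ can satisfy $|u_{v, Nm + b}|_v \leq B_1^m$. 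Since $B_1 > B_0^N$ and $B_0^{b/m} \to 1$ as $m \to \infty$, one obtains $|u_{v, Nm + b}|_v \geq B_0^{Nm + b}$ for all sufficiently large $m$. Running this over all $N$ residues $b$ yields (iii).

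The main obstacle is the reduction to a non-degenerate polynomial-exponential sum. The essential $v$-part $u_{v,n}$ is not in general non-degenerate, since its characteristic roots $\alpha_i$ with $i \in I$ can have root-of-unity ratios, so the Subspace Theorem cannot be invoked on $u_{v,n}$ directly. Passing to the arithmetic progression $n = Nm + b$ with a common period $N$ killing every $\zeta_i$ is what replaces the $\alpha_i$ by the non-degenerate $\beta_l^N$ and packages the root-of-unity information into the polynomial coefficients $Q_{l,b}(m)$. The $v$-stability hypothesis then ensures that the resulting non-degenerate sum is non-trivial on each progression, which is the exact precondition for Proposition~\ref{prop:KMN} to supply the desired exponential lower bound. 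Once this reduction is set up, the remainder is bookkeeping with heights and with the choice of the intermediate threshold $B_1$.
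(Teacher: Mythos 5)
Your proof is correct and takes essentially the same route as the paper: both pass to arithmetic progressions $n = Nm + b$ (the paper uses $L$ equal to the lcm of the orders of the roots of unity among all $\alpha_i/\alpha_j$, which works just as well as your $N$ chosen only from $I$) to turn the essential $v$-part into a genuinely non-degenerate polynomial--exponential sum with coefficients of logarithmic height, and then invoke Proposition~\ref{prop:KMN} to obtain the exponential lower bound. The paper dispatches (ii)$\Leftrightarrow$(iii) and (iii)$\Rightarrow$(i) in one line as immediate from Definition~\ref{def:v-stable}, whereas you spell out the decomposition $u_n = u_{v,n} + r_n$ and the threshold bookkeeping, but the substance is identical.
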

    \begin{proof}
        The equivalence (ii)$\Longleftrightarrow$(iii) and the implication
        (iii)$\Longrightarrow$(i) follow immediately from Definition~\ref{def:v-stable}. It remains to prove (i)$\Longrightarrow$(iii). 
        
        Let $L$ be the lcm of the orders of the roots of unity
        among the $\alpha_i/\alpha_j$.
        The $v$-stability of
        $(u_n)_{n\geq 0}$ imply that for each $b\in\{1,\ldots,L\}$, the sequence
        $u_{v,Ln+b}$ has the form:
        $$u_{v,Ln+b}=Q_1(n)\beta_1^n+\cdots+Q_s(n)\beta_s^n$$
        where $s\in\N$, $(\beta_1,\ldots,\beta_s)$ is a non-degenerate tuple 
        of non-zero algebraic numbers with $\vert\beta_1\vert_v=\ldots=\vert\beta_s\vert_v=M^L$, and $Q_i(x)\in\Qbar[x]\setminus\{0\}$. Let $B^L<B_1<M^L$, then Proposition~\ref{prop:KMN} gives that
        $$\vert u_{v,Ln+b}\vert_v> B_1^n > B^{Ln+b}$$
        for all sufficiently large $n$. Hence (iii) holds.
        \end{proof}

    \begin{proof}[Proof of Theorem~\ref{thm:main}]
        We assume the notation of Theorem~\ref{thm:main}. Put $F(x)=\displaystyle\sum_{n=0}^{\infty}f(n)a_nx^n$.  We can express
        $$a_n=P_1(n)\alpha_1^n+\cdots+P_r(n)\alpha_r^n$$
        with $r>0$, distinct non-zero $\alpha_1,\ldots,\alpha_r\in\Qbar$, and non-zero
        $P_1,\ldots,P_r\in\Qbar[x]$ for all large $n$. Without loss of generality, we have $1\leq s\leq r$ and 
        $$\vert\alpha_1\vert=\ldots=\vert \alpha_s\vert=\frac{1}{R}=\max_{1\leq i\leq r} \vert\alpha_i\vert$$
        while $\vert\alpha_i\vert<1/R$ for $s+1\leq i\leq r$.

        Let $K$ be a number field containing the $\alpha_i$'s, coefficients of the $P_i$'s, and the $f(n)$'s. If $f(n)=0$ for all large $n$ then we are done. We assume otherwise from now on. Let $\epsilon>0$. Corollary~\ref{cor:characterization} gives 
        $$(1/R)^{(1-\epsilon)n}<\vert a_n\vert<(1/R)^{(1+\epsilon)n}$$
        for all large $n$. From $h(f(n))=o(n)$, we have
$\vert f(n)\vert<e^{\epsilon n}$ and if $f(n)\neq 0$ (which happens for infinitely many $n$ under the current assumption) then we also have $e^{-\epsilon n}<\vert f(n)\vert$ when $n$ is large. Letting $\epsilon\to 0$, we have that the radius of convergence of $F(x)$ is $R$.

        Let  $d_1<d_2<\ldots$ be a sequence of positive integers, let $\cG_i\subseteq\{0,\ldots,d_i-1\}$ for each $i\geq 1$, and let $P_{i,j}\in\Qbar[x]$ for each $i\geq 1$ and $j\in \cG_i$ satisfy Property (P3) in Definition~\ref{def:almost q-p}.
        For $i\geq 1$, let $\cB_i:=\{0,\ldots,d_i-1\}\setminus\cG_i$ be the set of
        ``bad'' congruence classes modulo $d_i$ on which $f(n)$ might not be given by polynomial functions. If $\cB_i=\emptyset$, equivalently $\cG_i=\{0,\ldots,d_i-1\}$, then $f$ is a quasi-polynomial and 
        $F(x)$ represents a rational function. 
        Therefore we may assume that $\cB_i\neq \emptyset$ for every $i$.
        Put $b(i)=\vert \cB_i\vert\geq 1$ for $i\geq 1$, we have $b(i)=o(d_i)$ thanks to Property (P3b).

        \textbf{Step 1:} fix an arbitrary $i\geq 1$.
        Write $\cB_i=\{m_1,\ldots,m_{b(i)}\}$. Let $\zeta_1,\ldots,\zeta_{b(i)}$ be (not necessarily distinct) $d_i$-th roots of unity. For any subset $J$ of 
        $\{1,\ldots,b(i)\}$, we simplify the notation by using
        $$\zeta_J:=\prod_{j\in J}\zeta_j,\quad\zeta_J^{m_J}:=\prod_{j\in J}\zeta_j^{m_j},\quad\text{and}\quad \zeta_J^{-m_J}:=\prod_{j\in J}\zeta_j^{-m_j};$$
        when $J=\emptyset$ we interpret all the above as $1$.  \emph{The goal of this step} is to exhibit a relation among power series of the form $F(\zeta_Jx)$ where $J$ runs over the $2^{b(i)}$ many subsets of
        $\{1,\ldots,b(i)\}$.

        We consider the auxiliary power series
        \begin{equation}\label{eq:cA(x)}
            \cA(x):=\sum_{n=0}^{\infty} (1-\zeta_1^{-m_1}\zeta_1^n)\cdots (1-\zeta_{b(i)}^{-m_{b(i)}}\zeta_{b(i)}^n)f(n)a_nx^n.
        \end{equation}
        Obviously, the factor $(1-\zeta_1^{-m_1}\zeta_1^n)\cdots (1-\zeta_{b(i)}^{-m_{b(i)}}\zeta_{b(i)}^n)=0$ if $n\equiv m_j\bmod d_i$ for some $1\leq j\leq b(i)$. Therefore it suffices to consider $n$ in the good congruence classes 
        $\cG_i$ modulo $d_i$:
        $$\cA(x)=\sum_{j\in\cG_i}\sum_{n\in j+d_i\N_0} (1-\zeta_1^{-m_1+j})\cdots(1-\zeta_{b(i)}^{-m_{b(i)}+j})f(n)a_nx^n.$$
        Since $\sum a_nx^n$ is rational and $f(n)=P_{i,j}(n)$ for every sufficiently large $n\in j+d_i\N_0$, each series
        $$\sum_{n\in j+d_i\N_0} (1-\zeta_1^{-m_1+j})\cdots(1-\zeta_{b(i)}^{-m_{b(i)}+j})f(n)a_nx^n$$
        is rational. Therefore $\cA(x)$ is rational.
        
        On the other hand, we can expand \eqref{eq:cA(x)} directly:
        \begin{align*}
        \cA(x)&=\sum_{J\subseteq \{1,\ldots,b(i)\}}\sum_{n=0}^{\infty}(-1)^{\vert J\vert}\cdot \zeta_J^{-m_J}\cdot \zeta_J^n\cdot f(n)a_nx^n\\
        &=\sum_{J\subseteq \{1,\ldots,b(i)\}}(-1)^{\vert J\vert}\cdot \zeta_J^{-m_J}\cdot F(\zeta_Jx).
        \end{align*}
    The conclusion of this step is that there exist non-zero algebraic numbers $c_J$ depending on $\cB_i$, $J$, and the given $\zeta_1,\ldots,\zeta_{b(i)}$ such
    that
    $$\sum_{J\subseteq\{1,\ldots,b(i)\}} c_JF(\zeta_Jx)\in\Qbar(x).$$

    \emph{From now on,} we assume that $F(x)$ does not admit the circle of radius $R$ as a natural boundary. We will prove that $f$ is a quasi-polynomial and hence $F$ represents a rational function.

    \textbf{Step 2:} for $a\in\C$ and $\rho>0$, let $D(a,\rho)$ denote the open disk of radius $\rho$ centered at $a$. Our assumption on $F(x)$ means 
    there is a point $z=Re^{i\theta}$ with $\theta\in [0,2\pi)$ and an open set $U$ containing $z$ such that
    $F$ extends to an analytic function on $D(0,R)\cup U$. For an open set $V$ of $\C$, let 
    $$\cF(V):=\left\{\text{meromorphic}\ g:\ V\rightarrow \C\cup\{\infty\}:\ \begin{array}{l}
        \text{there exists $g_1\in\Qbar(x)$ such that}\\
        \text{$g-g_1$ is analytic on $V$}
    \end{array}\right\}$$
%    $\cF(V)$ denote the $\Qbar$-algebra of meromorphic functions $g:\ V\rightarrow\C\cup\{\infty\}$ such that $g$ has only finitely many poles and these poles belong to $\Qbar$. 
    \emph{The goal of this step} is to extend $F$ to a function $g\in \cF(D(0,R'))$ with $R'>R$.

    For $T\subseteq\R$ and $\rho>0$, let $E(T,\rho):=\{\rho e^{it}:\ t\in T\}$ denote the corresponding arc on the circle of radius $\rho$.    Define
    $$\scrT=\left\{t\in [\theta,\infty):\ \begin{array}{l}\text{there exists an open set $V$ containing $E([\theta,t],R)$ such that}\\ 
     \text{$F$ extends to $g\in\cF(D(0,R)\cup V)$.}
    \end{array}\right\}$$

    By our assumption on $z=Re^{i\theta}$ and $U$, the set $\scrT$ contains
    an interval $[\theta,\theta']$ with $\theta'>\theta$. If $\scrT$ contains some $t$ with $t-\theta\geq 2\pi$ then there exists an open set $V$ containing the circle $E([\theta,t],R)$ such that $F$ extends to
    an element in $\cF(D(0,R)\cup V)$. And since $D(0,R)\cup V$ contains some
    $D(0,R')$ with $R'>R$, we obtain our goal.

    For the remainder of this step, we assume $t<\theta+2\pi$ for every $t\in \scrT$ and arrive at a contradiction. Let $t^*:=\sup\scrT>\theta$. Fix a sufficiently large $k$ such that:
    \begin{equation}\label{eq:choice of k}
    t^*-\frac{2\pi b(k)+1}{d_k}>\theta;
    \end{equation}
    this is possible since $\displaystyle\lim_{k\to\infty} \frac{2\pi b(k)+1}{d_k}=0$.

    Since $t^*-\frac{\pi}{d_k}\in\scrT$, there is an open set $V$ containing $E([\theta,t^*-\frac{\pi}{d_k}],R)$ such that $F$ extends to an element of $\cF(D(0,R)\cup V)$. By compactness of $E([\theta,t^*-\frac{\pi}{d_k}],R)$ and cutting down $V$ if needed, we may assume that $V$ is the polar rectangle
    $$V=\left\{x_1e^{ix_2}:\ x_1\in (R-\delta,R+\delta),\ x_2\in \left(\theta-\delta,t^*-\frac{\pi}{d_k}+\delta\right)\right\}$$
    for a sufficiently small $\delta>0$. Let $V'$ be the ``further'' polar rectangle:
    $$V'=\left\{x_1e^{ix_2}:\ x_1\in(R-\delta,R+\delta),\ x_2\in \left(t^*-\frac{\pi}{d_k}-\delta,t^*+\frac{\pi}{d_k}+\delta\right)\right\}$$
    that has some small overlap with $V$.

    We now use the conclusion of Step 1 for $\cB_k$ and the $d_k$-th roots of unity
    $\zeta_1=\cdots=\zeta_{b(k)}=e^{2\pi i/d_k}$. For any subset $J$ of $\{1,\ldots,b(k)\}$, the notation $\zeta_J$ in Step 1 is simply:
    $$\zeta_J=e^{2\pi \vert J\vert i/d_k}.$$
    We have non-zero algebraic numbers $c$ and $c_{J}$ for $J\subsetneq \{1,\ldots,b(k)\}$ such that:
    \begin{equation}\label{eq:Step 2 key}
        cF(e^{2\pi b(k)i/d_k}x)+\sum_{J\subsetneq\{1,\ldots,b(k)\}} c_JF(e^{2\pi\vert J\vert  i/d_k}x)\in\Qbar(x).
    \end{equation}

    Let $y\in V'$, write $y=e^{2\pi b(k)i/d_k}x$. Then by our choice of 
    $k$, the sets $V$ and $V'$, every point $e^{2\pi \vert J\vert i/d_k}x$ with
    $J\subsetneq\{1,\ldots,b(k)\}$ is of the form $x_1e^{ix_2}$ with
    $x_1\in (R-\delta,R+\delta)$ and 
    $$t^*-\frac{\pi}{d_k}-\delta-\frac{2\pi b(k)}{d_k}<x_2<t^*-\frac{\pi}{d_k}+\delta.$$
    Combining this with \eqref{eq:choice of k}, we have:
    $$\theta-\delta<x_2<t^*-\frac{\pi}{d_k}+\delta,$$
    in other words every point $e^{2\pi \vert J\vert i/d_k}x$ with
    $J\subsetneq\{1,\ldots,b(k)\}$ is in $V$. Using \eqref{eq:Step 2 key} and the fact that $F$ can be extended to an element of $\cF(D(0,R)\cup V)$, we can conclude that $F$ can be extended to an element of $\cF(D(0,R)\cup V\cup V')$. This means $t^*+\frac{\pi}{d_k}\in \scrT$, contradicting the property $t^*=\sup \scrT$. Therefore $F$ can be extended to an element of $\cF(D(0,R'))$ for some $R'>R$.

    \textbf{Step 3:} we use Proposition~\ref{prop:KMN} to 
    finish the proof. By Step 2, we have $R'>R$ and  a rational function $F_1\in \Qbar(x)$ such that $F-F_1$ extends to an analytic function on $D(0,R')$. Since $F$ is analytic on $D(0,R)$, the poles of $F_1$ must be outside $D(0,R)$. By lowering $R'$ and removing the principal parts of $F_1$ at poles outside the closed disk $\overline{D(0,R)}$, we may assume that the poles of $F_1$, if any, lie on the circle of radius $R$. Write
    $$F_1(x)=\sum_{n=0}^{\infty}\tilde{a}_nx^n$$
    then for any $B'>1/R'$, we have
    $$\vert f(n)a_n-\tilde{a}_n\vert<B'^n$$
    for all sufficiently large $n$ since $F-F_1$ is the power series of an analytic function on $D(0,R')$.

    For all sufficiently large $n$, we can express
    $$\tilde{a}_n=\tilde{P}_1(n)\tilde{\alpha}_1^n+\cdots+\tilde{P}_{\tilde{r}}(n)\tilde{\alpha}_{\tilde{r}}^n$$
    where $\tilde{r}\in\N_0$, $\tilde{\alpha}_1,\ldots,\tilde{\alpha}_{\tilde{r}}$ are non-zero algebraic numbers on the circle of radius $R$, and 
    $\tilde{P}_1,\ldots,\tilde{P}_{\tilde{r}}\in \Qbar[x]\setminus\{0\}$. 
    We refer the reader to the notation at the beginning of the proof. We now fix $B'\in (1/R',1/R)$ sufficiently close to $1/R$
    and use the fact that $\vert f(n)\vert=e^{o(n)}$ and
    $\vert \alpha_i\vert<1/R$ for $s+1\leq i\leq r$ to conclude that:
    \begin{equation}\label{eq:inequality in n}
        \vert f(n)P_1(n)\alpha_1^n+\cdots+f(n)P_s(n)\alpha_s^n-(\tilde{P}_1(n)\tilde{\alpha}_1^n+\cdots+\tilde{P}_{\tilde{r}}(n)\tilde{\alpha}_{\tilde{r}}^n)\vert<B'^n
    \end{equation}
    for all large $n$.

    Let $\tilde{L}$ be the $\lcm$ of the orders of the roots of unity
    among the $\alpha_i/\alpha_j$'s, $\tilde{\alpha}_i/\tilde{\alpha}_j$'s, and
    $\alpha_{i}/\tilde{\alpha}_j$'s. We fix an arbitrary $b\in\{0,\ldots,\tilde{L}-1\}$ and restrict to the arithmetic progression $n=\tilde{L}m+b$. We can express:
    \begin{equation}\label{eq:rewrite a_n}
    P_1(n)\alpha_1^n+\cdots+P_s(n)\alpha_s^n=Q_1(m)\beta_1^m+\cdots+Q_t(m)\beta_t^m
    \end{equation}
    with $t\in\N$, non-degenerate tuple $(\beta_1,\ldots,\beta_t)$ with $\vert\beta_i\vert=1/R^{\tilde{L}}$, and
    non-zero $Q_i\in\Qbar[x]$ for $1\leq i\leq t$. The stability assumption on $\displaystyle\sum a_nx^n$ is used here to have $t\geq 1$, i.e. the RHS of \eqref{eq:rewrite a_n}
    is not the zero sequence.

    Similarly, we express:
    \begin{equation}\label{eq:rewrite tilde a_n}
    \tilde{P}_1(n)\tilde{\alpha}_1^n+\cdots+\tilde{P}_{\tilde{r}}(n)\tilde{\alpha}_s^n=\tilde{Q}_1(m)\tilde{\beta}_1^m+\cdots+\tilde{Q}_{\tilde{t}}(m)\tilde{\beta}_{\tilde{t}}^m
    \end{equation}
    with $\tilde{t}\in\N_0$, non-degenerate tuple $(\tilde{\beta}_1,\ldots,\tilde{\beta}_{\tilde{t}})$ with
    $\vert\tilde{\beta}_i\vert=1/R^{\tilde{L}}$, and
    non-zero $\tilde{Q}_i\in\Qbar[x]$ for $1\leq i\leq \tilde{t}$. Note that we allow the possibility
    $\tilde{t}=0$ here and the empty data $(\tilde{\beta}_i,\tilde{Q}_i)$ for $1\leq i\leq \tilde{t}$ simply mean that the RHS of \eqref{eq:rewrite tilde a_n}
    is the zero sequence in this case. 
    Moreover, by our choise of $\tilde{L}$, we have the further property that whenever $\beta_i/\tilde{\beta_j}$ is a root of unity then $\beta_i=\tilde{\beta}_j$.
    
    Then \eqref{eq:inequality in n} becomes:
    \begin{equation}\label{eq:inequality in m}
       \left\vert \sum_{i=1}^t f(\tilde{L}m+b)Q_i(m)\beta_i^m-\sum_{i=1}^{\tilde{t}} \tilde{Q}_i(m)\tilde{\beta}_i^m\right\vert<B'^{\tilde{L}m+b}
    \end{equation}
    for all sufficiently large $m$.
    There are 2 cases

    \textbf{Case 1:} $f(\tilde{L}m+b)=0$ for infinitely many $m$. Together with \eqref{eq:inequality in m}, we have:
    $$\left\vert \sum_{i=1}^{\tilde{t}}\tilde{Q}_i(m)\tilde{\beta}_i^m\right\vert<B'^{\tilde{L}m+b}$$
    for infinitely many $m$. Proposition~\ref{prop:KMN} implies that this happens only when $\tilde{t}=0$, in other words
    $\displaystyle\sum_{i=1}^{\tilde{t}}\tilde{Q}_i(m)\tilde{\beta}_i^m$ is the zero sequence. Combining this with \eqref{eq:inequality in m} implies that:
    $$\left\vert\sum_{i=1}^t f(\tilde{L}m+b)Q_i(m)\beta_i^m\right\vert<B'^{\tilde{L}m+b}$$
    for all sufficiently large $m$. Property (P2) guarantees that
    $h(f(\tilde{L}m+b)Q_i(m))=o(m)$ as $m\to\infty$. Proposition~\ref{prop:KMN}
    implies that $f(\tilde{L}m+b)=0$ for all sufficiently large $m$.

    \textbf{Case 2:} $f(\tilde{L}m+b)\neq 0$ for every sufficiently large $m$.
    By applying Proposition~\ref{prop:KMN} for \eqref{eq:inequality in m}, we must have that $\tilde{t}=t$ and there is a permutation $\sigma$ on $\{1,\ldots,t\}$
    such that
    $$\beta_i=\tilde{\beta}_{\sigma(i)}\quad\text{and}\quad f(\tilde{L}m+b)Q_i(m)=\tilde{Q}_{\sigma(i)}(m)$$
    for $1\leq i\leq t$ and every sufficiently large $m$. In particular, there is $A(x)\in\Qbar(x)$ such that $f(\tilde{L}m+b)=A(m)$ for all large $m$.

    It remains to show that $A$ is a polynomial using Property (P3) that $f$ is given by polynomials on most congruence classes modulo $d_k$.
    Let $d_k>\tilde{L}$ be large. There are at least $\lfloor d_k/\tilde{L}\rfloor$ many
    $j$'s in $\{0,\ldots,d_k-1\}$ such that $j\equiv b\bmod \tilde{L}$. 
    Therefore if
    $$\frac{\vert\cG_k\vert}{d_k}>1-\frac{1}{2\tilde{L}}$$
    then we can find $j\in \cG_k$ such that $j\equiv b\bmod \tilde{L}$. Write $j=b+q\tilde{L}$. Then for all sufficiently large $m$, we have:
    $$P_{k,j}(\tilde{L}m)=f(d_k\tilde{L}m+j)=f(d_k\tilde{L}m+q\tilde{L}+b)=A(d_km+q).$$
    Therefore $A\in\Qbar[x]$. This finishes the proof that $f$ is a quasi-polynomial.
    \end{proof}

	\begin{lemma}\label{lem:q-p and almost q-c}
	Let $f:\ \N_0\rightarrow \Qbar$ be both a quasi-polynomial and an almost quasi-constant. Then $f$ is a
	quasi-constant.
	\end{lemma}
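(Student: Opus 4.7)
The plan is to show directly that each polynomial appearing in the quasi-polynomial representation of $f$ must in fact be a constant. Write the quasi-polynomial representation as $f(n)=P_j(n)$ for all sufficiently large $n$ with $n\equiv j\bmod d$, where $d\in\N$ and $P_0,\ldots,P_{d-1}\in\Qbar[x]$. Invoke the almost quasi-constant representation of $f$ from Definition~\ref{def:almost q-p}: we have $d_1<d_2<\cdots$, sets $\cG_i\subseteq\{0,\ldots,d_i-1\}$ with $|\cB_i|/d_i\to 0$ (where $\cB_i=\{0,\ldots,d_i-1\}\setminus\cG_i$), and constants $c_{i,j}\in\Qbar$ for $j\in\cG_i$ such that $f(n)=c_{i,j}$ for all sufficiently large $n\equiv j\bmod d_i$.

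Fix an arbitrary $j_0\in\{0,\ldots,d-1\}$; the goal is to show $P_{j_0}$ is a constant polynomial. Among the residues $j\in\{0,\ldots,d_i-1\}$ satisfying $j\equiv j_0\bmod d$, there are at least $\lfloor d_i/d\rfloor$ of them. Since $|\cB_i|=o(d_i)$, for all sufficiently large $i$ we have $\lfloor d_i/d\rfloor>|\cB_i|$, so by a pigeonhole argument there exists at least one $j\in\cG_i$ with $j\equiv j_0\bmod d$. Fix such a pair $(i,j)$.

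For every sufficiently large $n$ with $n\equiv j\bmod d_i$, the congruence $n\equiv j_0\bmod d$ also holds; hence both representations apply and give $P_{j_0}(n)=f(n)=c_{i,j}$. Since this equality holds for infinitely many values of $n$, the polynomial $P_{j_0}(x)-c_{i,j}$ has infinitely many roots and must be identically zero, so $P_{j_0}$ is the constant $c_{i,j}$. As $j_0$ was arbitrary, every $P_{j}$ in the quasi-polynomial representation is a constant, which is precisely the statement that $f$ is a quasi-constant.

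This argument is entirely elementary, using only the definitions of quasi-polynomial and almost quasi-constant together with the density condition (P3b). There is no real obstacle beyond ensuring the two domains of validity (the ``sufficiently large $n$'' for the quasi-polynomial representation, and the threshold $C_i$ for the almost quasi-constant representation at the chosen index $i$) overlap in infinitely many integers $n$ in the common arithmetic progression $n\equiv j\bmod d_i$, which is automatic.
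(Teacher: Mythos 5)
Your overall strategy---pigeonhole a good class $j\in\cG_i$ inside the fixed residue class $j_0\bmod d$ and then force $P_{j_0}$ to agree with a constant at infinitely many integers---is the same as the paper's (which invokes the argument at the end of the proof of Theorem~\ref{thm:main}). However, one step as written is false: from $n\equiv j\bmod d_i$ together with $j\equiv j_0\bmod d$ you conclude that $n\equiv j_0\bmod d$. This holds only when $d\mid d_i$, and nothing in Definition~\ref{def:almost q-p} guarantees any divisibility relation between $d$ and the $d_i$'s; for instance with $d=3$, $d_i=5$, $j=j_0=1$, the integer $n=6$ satisfies $n\equiv j\bmod d_i$ but $n\equiv 0\not\equiv j_0\bmod d$. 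In the motivating examples the $d_i$ are of the form $p^i$ or $Lp^i$, so they can well be coprime to $d$.

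The gap is easily closed, and closing it reproduces the paper's argument. Since $j\equiv j_0\bmod d$, the congruences $n\equiv j\bmod d_i$ and $n\equiv j_0\bmod d$ are compatible, so their common solutions form a full residue class modulo $\lcm(d,d_i)$; concretely, take $n=dd_im+j$ for $m\in\N_0$, which lies in both classes (this is exactly the role of the substitution $n=d_k\tilde{L}m+j$ in the proof of Theorem~\ref{thm:main}). For all sufficiently large $n$ of this form both representations apply, giving $P_{j_0}(n)=f(n)=c_{i,j}$ at infinitely many integers $n$, hence $P_{j_0}\equiv c_{i,j}$ is constant, and since $j_0$ was arbitrary $f$ is a quasi-constant. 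Your closing remark about the thresholds is fine: $i$ and $j$ are fixed before $n$ grows, so the ``sufficiently large'' conditions from the two representations overlap automatically. With the congruence step repaired, your proof is correct and is essentially the paper's own argument specialized to constant polynomials.
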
    
    \begin{proof}
    Since $f$ is a quasi-polynomial, there exist $\tilde{L}$ and $P_0,\ldots,P_{\tilde{L}-1}\in\Qbar[x]$ such that
    $f(n)=P_{i}(n)$ for all sufficiently large $n$ with $n\equiv i\bmod \tilde{L}$. 
    On the other hand, there exist $d_1<d_2<\ldots$ such that $f$ is given by a constant 
    polynomial on each of most of the congruence classes modulo $d_i$ as $i\to\infty$. By a completely analogous  argument to the proof that $A\in\Qbar[x]$ at the end of the proof of Theorem~\ref{thm:main}, we can conclude that each $P_{i}$ is constant.
    \end{proof}
    
    \begin{proof}[Proof of Theorem~\ref{thm:affirmative}]
      We assume the notation of Theorem~\ref{thm:affirmative} as in Section~\ref{sec:intro}. Note that $\varphi^k-1$ is not separable for some $k\geq 1$. Hence the field $K$ must have characteristic $p>0$. We have
      the following results from \cite[Section~2]{BC18_DO}:
      \begin{itemize}
          \item $N_k(\varphi)=\displaystyle\frac{\deg(\varphi^k-1)}{\deg_i(\varphi^k-1)}$ where $\deg_i$ denotes the inseparability degree.
          \item  $\deg(\varphi^k-1)=\prod_{i=1}^{2g}(\xi^k-1)$ where $\xi_1,\ldots,\xi_{2g}$ are algebraic numbers none of which is a root of unity.
          \item There exist sequences $(r_k)_{k\geq 1}$ and $(s_k)_{k\geq 1}$
          with $r_k\in\Q^*$, $s_k\in\Z_{\leq 0}$, and 
          $$\deg_i(\varphi^k-1)=r_k\vert k\vert_p^{s_k}$$
          for every $k$. Moreover, these sequences are periodic: there exists $L\in\N$ such that $r_{k+L}=r_k$ and $s_{k+L}=s_k$ for every $k$.
      \end{itemize}
      
      From \cite[Lemma~1]{BMW14_TA}, it suffices to prove that the power series
    $$\sum_{k=1}^{\infty}N_k(\varphi)x^k=\sum_{k=1}^{\infty}\frac{\prod_{i=1}^{2g}(\xi_i^k-1)}{r_k\vert n\vert^{s_k}}x^k$$
      admits the circle of radius $1/\Lambda$ as a natural boundary, where we recall that $\Lambda:=\displaystyle\prod_{i=1}^{2g}\max\{\vert\xi_i\vert,1\}$.
     
     We assume otherwise that the above power series can be extended analytically beyond the circle of radius $1/\Lambda$. Note that 
      the series $\displaystyle\sum_{k=1}^{\infty}\prod_{i=1}^{2g} (\xi_i^k-1)x^k$
      is stable, see Example~\ref{eg:non-degenerate is stable}.
      The function $f(k)=\displaystyle\frac{1}{r_k\vert n\vert_p^{s_k}}$
      is an almost quasi-constant (strictly speaking, this is defined for $k\in\N$, but we can assign any value for $f(0)$ to have $f$ on $\N_0$) by essentially the same reasoning in Example~\ref{eg:|n|_p}: take $d_i=Lp^i$, then
      $f$ is constant on each congruence class modulo $d_i$ except the congruence classes of $0,p^i,2p^i,\ldots,(L-1)p^i$ modulo $d_i$. Then Theorem~\ref{thm:main} implies that $f$ must be a quasi-polynomial. Lemma~\ref{lem:q-p and almost q-c} gives that $f$ is a quasi-constant.
    Therefore $f(k)=\displaystyle\frac{1}{\deg_i(\varphi^k-1)}$ only takes finitely many values for $k\in\N$. This contradicts \cite[Lemma~4.4]{BC18_DO}
    and we finish the proof.
    \end{proof}

    \section{Further applications}
    In this section, we present a few similar applications of Theorem~\ref{thm:main}. While the previous application, Theorem~\ref{thm:affirmative}, relies on the easy fact that the function $\vert n\vert_p$ is an almost quasi-constant, the key ingredient in the further applications is the more general result that a function of the form
    $$\vert P_1(n)\alpha_1^n+\cdots+P_r(n)\alpha_r^n\vert_v$$
    is an almost quasi-polynomial where $v$ is non-archimedean and the $\alpha_i$'s satisfy the normalized condition
    $\max_{i}\vert \alpha\vert_v=1$. In fact, it is more desirable to treat functions that are rational powers of the above. First, we have:
    \begin{lemma}\label{lem:Lm+b}
    Let $L\in\N$. A function $f:\ \N_0\rightarrow \Qbar$ is an almost quasi-polynomial (respectively almost quasi-constant) if and only if for each $b\in\{0,\ldots,L-1\}$ the function $f_b(m):=f(Lm+b)$ is so.
    \end{lemma}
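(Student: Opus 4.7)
The plan is to verify the three defining properties (P1), (P2), (P3) in each direction separately. Properties (P1) and (P2) behave well under both operations: if $f(n)\in K$ for a number field $K$, then trivially $f_b(m)\in K$, and conversely taking $K$ to be the compositum of the fields associated with the $f_b$'s handles (P1). For (P2), the identity $h(f_b(m))=h(f(Lm+b))=o(Lm+b)=o(m)$ and its reverse $h(f(n))=h(f_{n\bmod L}(\lfloor n/L\rfloor))=o(\lfloor n/L\rfloor)=o(n)$ handle both directions immediately. The substantial content lies in (P3).

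For the forward direction, suppose $f$ is an almost quasi-polynomial with associated data $(d_i,\cG_i,P_{i,j})$. For a fixed $b\in\{0,\ldots,L-1\}$, I would take $d'_i:=d_i$ for $f_b$ and set
$$\cG'_i:=\{m_0\in\{0,\ldots,d_i-1\}:(Lm_0+b)\bmod d_i\in\cG_i\},$$
together with $P'_{i,m_0}(x):=P_{i,(Lm_0+b)\bmod d_i}(Lx+b)$. The key point is that the self-map $\pi_i\colon m_0\mapsto (Lm_0+b)\bmod d_i$ on $\Z/d_i\Z$ has all its fibers of the same size $\gcd(L,d_i)\le L$, so the complement $\cB'_i$ satisfies $|\cB'_i|\le L\cdot|\cB_i|=L\cdot o(d_i)=o(d_i)$, giving (P3b) for $f_b$. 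Verification of (P3a) for sufficiently large $m$ is routine.

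For the reverse direction, suppose each $f_b$ has data $(d_i^{(b)},\cG_i^{(b)},P_{i,j}^{(b)})$. I would set
$$D_i:=\lcm\bigl(d_i^{(0)},\ldots,d_i^{(L-1)}\bigr),\qquad d'_i:=LD_i,$$
and declare a residue $a=Lq+b\in\{0,\ldots,LD_i-1\}$ with $b\in\{0,\ldots,L-1\}$ to belong to $\cG'_i$ precisely when $q\bmod d_i^{(b)}\in\cG_i^{(b)}$; the associated polynomial is then $P'_{i,a}(x):=P_{i,q\bmod d_i^{(b)}}^{(b)}((x-b)/L)$. When $n\equiv a\bmod LD_i$ and $n$ is large, writing $n=Lm+b$ forces $m\equiv q\bmod D_i$, hence $m\equiv q\bmod d_i^{(b)}$, so $f(n)=f_b(m)=P'_{i,a}(n)$. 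The bad set satisfies
$$|\cB'_i|=\sum_{b=0}^{L-1}\frac{D_i}{d_i^{(b)}}\cdot|\cB_i^{(b)}|=D_i\sum_{b=0}^{L-1}\frac{|\cB_i^{(b)}|}{d_i^{(b)}}=o(D_i)=o(d'_i),$$
which gives (P3b). The only mild technical point, and what I would flag as the main obstacle, is ensuring that $(d'_i)$ is a \emph{strictly} increasing sequence: since the $D_i$ are only guaranteed to be nondecreasing and tend to infinity (as each $d_i^{(b)}\to\infty$), one passes to a subsequence of $i$'s along which $D_i$ strictly increases; this is harmless because (P3b) is a limiting statement. Finally, in both directions the construction shows that the polynomials $P'_{i,\cdot}$ are constants whenever the $P_{i,\cdot}$ (or $P_{i,\cdot}^{(b)}$) are, which yields the parenthetical statement about almost quasi-constants.
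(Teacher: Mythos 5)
Your proof is correct, and while your ``only if'' direction matches the paper's (both bound the bad classes for $f_b$ by the preimage of $\cB_i$ under $m\mapsto Lm+b$ on $\Z/d_i\Z$, giving $|\cB_i'|\le L|\cB_i|=o(d_i)$), your ``if'' direction takes a genuinely different route. The paper decomposes $f=\sum_{b}\tilde f_b$, where $\tilde f_b$ agrees with $f$ on the class $b\bmod L$ and vanishes elsewhere, shows each $\tilde f_b$ is an almost quasi-polynomial using the moduli $Ld_i^{(b)}$ coming from that single $b$ (the classes $Lj+c\bmod Ld_i^{(b)}$ with $c\neq b$ are good for free since $\tilde f_b\equiv 0$ there), and then invokes closure of almost quasi-polynomials under sums (left as an exercise in Example~\ref{eg:|n|_p}). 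You instead synchronize all residues at once with $d_i'=L\lcm_b\, d_i^{(b)}$ and build the data for $f$ directly; this avoids appealing to the sum-closure fact, at the cost of the lcm bookkeeping and the monotonicity issue you flag. Two small inaccuracies, neither fatal: the fibers of $m_0\mapsto(Lm_0+b)\bmod d_i$ are not all of size $\gcd(L,d_i)$ (fibers over points outside the image are empty), but the bound $|\cB_i'|\le\gcd(L,d_i)\,|\cB_i|\le L|\cB_i|$ you actually use is correct; and the $D_i=\lcm_b d_i^{(b)}$ need not even be nondecreasing, but since $D_i\ge d_i^{(0)}\to\infty$ your proposed passage to a strictly increasing subsequence is indeed all that is needed, as (P3a) is a per-index condition and (P3b) survives subsequences.
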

    \begin{proof}
    We prove the lemma for almost quasi-polynomials, the case of almost quasi-constants is similar. For the ``only if'' direction, fix $b\in\{0,\ldots,L-1\}$ and it is immediate that $f_b$ satisfies (P1) and (P2). Let $d_1<d_2<\ldots$, the subset $\cG_i\subseteq\{0,\ldots,d_i-1\}$ for $i\geq 1$, and the  $P_{i,j}$'s be as in Definition~\ref{def:almost q-p}. Let 
    $\cB_i=\{0,\ldots,d_i-1\}\setminus\cG_i$ for $i\geq 1$.
    We consider the map $\Z/d_i\Z\rightarrow\Z/d_i\Z$ given by $m\mapsto Lm+b$. There are at most $L\vert \cB_i\vert$ many elements in the preimages
    of $\cB_i$ modulo $d_i$. Since $\vert\cB_i\vert=o(d_i)$ and outside those preimages the function $f_b$ is given by polynomials, we have that $f_b$ satisfies (P3). Therefore $f_b$ is an almost quasi-polynomial.

    We now prove the ``if'' direction. For each $b\in\{0,\ldots,L-1\}$, define:
    \[
    \tilde{f}_b(n)= 
    \begin{cases} 
    f(n) & \text{if } n \equiv b\bmod L,\\
    0       & \text{otherwise.}
    \end{cases}
    \]
    Since $f=\displaystyle\sum_{b=0}^{L-1}\tilde{f}_b$, it suffices to fix an arbitrary $b$ and prove that $\tilde{f}_b$ is an almost quasi-polynomial. Since $f_b$ is an almost quasi-polynomial, it follows immediately that $\tilde{f}_b$ satisfies (P1) and (P2). Let
    $d_1<d_2<\ldots$ and $\cG_i\subseteq\{0,\ldots,d_i-1\}$
    for each $i$ such that $f_b(m)$ is given by a polynomial for all sufficiently large $m$ in each congruence class $j$ modulo $d_i$ with $j\in\cG_i$.
    
    We consider the congruence classes modulo $Ld_1<Ld_2<\ldots$. Each congruence class modulo $Ld_i$ is of the form
    $$Lj+c\bmod Ld_i$$
    where $0\leq j\leq d_i-1$ and $0\leq c\leq L-1$. If $c\neq b$ then $\tilde{f}_b$ is identically zero on the above congruence class. If $c=b$ and $j\in \cG_i$ then $\tilde{f}_b$ is eventually given by a polynomial on the above congruence class. Therefore on each of at least
    $$(L-1)d_i+\vert\cG_i\vert$$
    many congruence classes modulo $Ld_i$, the function $\tilde{f}_b(n)$ is given by a polynomial when $n$ is large. Since $\displaystyle\lim_{i\to\infty}\frac{\vert\cG_i\vert}{d_i}=1$, we have:
    $$\lim_{i\to\infty}\frac{(L-1)d_i+\vert \cG_i\vert}{Ld_i}=1.$$
    Therefore $\tilde{f}_b$ satisfies (P3) and we finish the proof.
    \end{proof}

    \begin{lemma}\label{lem:p-adic analytic}
        Let $K$ be a number field and $v\in K_v$. Let $g(x)=\displaystyle\sum_{n=0}^{\infty} c_nx^n\in K_v[[x]]$
        that is convergent on $\{a\in K_v:\ \vert a\vert_v\leq 1\}$,
        equivalently $\displaystyle\lim_{n\to\infty} \vert c_n\vert_v=0$. Then the 
        function $f:\ \N_0\rightarrow\Qbar$ given by
        $f(n)=\vert g(n)\vert_v$ satisfies Property (P3) in which 
        the $P_{i,j}$'s are constant polynomials.
    \end{lemma}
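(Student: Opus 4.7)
The plan is to apply non-archimedean Weierstrass preparation to $g$ and then analyze the resulting factors on residue classes modulo powers of the rational prime $p$ lying below $v$. If $g \equiv 0$ the conclusion is trivial; otherwise, fix a uniformizer $\pi$ of $K_v$ and write $g(x) = \pi^m P(x) u(x)$ via Weierstrass preparation in the Tate algebra $K_v\langle x \rangle$, so that $P \in K_v[x]$ is a distinguished polynomial of some degree $s$ and $u \in K_v\langle x \rangle$ has no zeros on the closed unit disk $\cO_{\overline{K_v}}$. Factoring $P(x) = \prod_{i=1}^s (x - \alpha_i)^{k_i}$ over $\overline{K_v}$, for every $n \in \N_0 \subseteq \Z_p \subseteq \cO_{\overline{K_v}}$ one has
$$|g(n)|_v = |\pi|_v^m \cdot |u(n)|_v \cdot \prod_{i=1}^s |n - \alpha_i|_v^{k_i},$$
where $|\cdot|_v$ also denotes its unique extension to $\overline{K_v}$.

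Next I will control each factor via a standard ultrametric argument. Since $u$ is Lipschitz and nonvanishing on the compact set $\cO_{\overline{K_v}}$, the infimum $\delta := \inf |u|_v$ is strictly positive; then $|x - y|_v$ sufficiently small forces $|u(x) - u(y)|_v < \delta \le |u(x)|_v$, which by the ultrametric inequality gives $|u(x)|_v = |u(y)|_v$. This yields some $N_0$ such that $|u(n)|_v$ depends only on $n \bmod p^{N_0}$. The same reasoning applies to $|n - \alpha_i|_v$ whenever $\alpha_i \notin \Z_p$: compactness of $\Z_p$ inside $\overline{K_v}$ gives $\inf_{n \in \Z_p} |n - \alpha_i|_v > 0$, so this factor is locally constant with some modulus $p^{N_i}$. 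For $\alpha_i \in \Z_p$, by contrast, $|n - \alpha_i|_v = p^{-v_p(n - \alpha_i)}$ is constant on every residue class modulo $p^N$ \emph{except} the single class containing $\alpha_i$, on which it may vary.

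Finally, take $d_i = p^i$ for $i$ larger than $\max_j N_j$ and let $\cG_i$ consist of those residue classes modulo $d_i$ that avoid the residue modulo $p^i$ of every $\alpha_k \in \Z_p$. At most $s$ classes are excluded, so $|\cG_i|/d_i \to 1$, while the preceding step shows that $|g(n)|_v$ is constant on each class in $\cG_i$. This establishes (P3) with constant polynomials $P_{i,j}$. The main input is Weierstrass preparation; the only mildly delicate point is the dichotomy between zeros of $P$ inside versus outside $\Z_p$, where the latter present no obstacle at all and the former contribute only a bounded number of bad residue classes, i.e.\ $|\cB_i| = O(1) = o(d_i)$.
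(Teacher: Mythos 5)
Your argument is correct but takes a genuinely different route from the paper. The paper's proof invokes Stra{\ss}mann's theorem to get finitely many zeros $z_1,\ldots,z_m$ of $g$ in $\Z_p$, removes the residue classes modulo $p^k$ containing them to obtain a compact set $U_k\subseteq\Z_p$ on which $\ell_k:=\min|g|_v>0$, and then uses a Lipschitz bound $|g(x)-g(y)|_v\le M|x-y|_v$ on $\Z_p$ to choose $t_k$ with $M/p^{t_k}<\ell_k$, so that $|g|_v$ is constant on the residue classes modulo $d_k:=p^{t_k}$ that meet $U_k$. You instead apply Weierstrass preparation in the Tate algebra $K_v\langle x\rangle$ to factor $g=\pi^m P(x)u(x)$ and analyze $|g(n)|_v$ factor by factor: the unit contributes a locally constant factor, roots of $P$ outside $\Z_p$ contribute locally constant factors, and roots inside $\Z_p$ are the only sources of bad residue classes, of which there are at most $\deg P$. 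Both routes lead to the same combinatorics; yours front-loads more algebraic structure (Weierstrass preparation rather than bare Stra{\ss}mann plus Lipschitz), in exchange for a more explicit description of the bad set. One correction: $\cO_{\overline{K_v}}$ is \emph{not} compact (the value group of $\overline{K_v}$ is dense in $\Q$ and its residue field is infinite), so the phrase ``compact set $\cO_{\overline{K_v}}$'' is wrong as written. Fortunately it is harmless: you only need $\inf_{n\in\Z_p}|u(n)|_v>0$, which follows from compactness of $\Z_p$ alone; and in fact a unit of $K_v\langle x\rangle$ has $|u(x)|_v$ equal to a constant on the whole closed unit disk, so no Lipschitz or compactness argument is needed for that factor at all.
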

    \begin{proof}
        The case $g=0$ is obvious, we assume otherwise. Let $p$ be the restriction of $v$ to $\Q$. We regard $g$ as an analytic
        function from $\Z_p$ to $K_v$. By Stra{\ss}mann's theorem \cite{Strassmann28}, $g$ has only finitely many zeros in $\Z_p$ denoted
        $z_1,\ldots,z_m$. 
        
        For $k\in\N$ that is sufficiently large, the elements
        $z_i\bmod p^k$ of $\Z_p/p^k\Z_p$ for $1\leq i\leq m$ are distinct. Let
        $\cC_k\subseteq\{0,1,\ldots,p^k-1\}$ such that $\cC_k$ mod $p^k$ is the complement of
        $\{z_i\bmod p^k:\ 1\leq i\leq m\}$ in $\Z_p/p^k\Z_p$. Let
        $$U_k=\bigcup_{c\in \cC_k} (c+p^k\Z_p)=\Z_p\setminus\bigcup_{i=1}^m (z_i+p^k\Z_p).$$
        By the definition and compactness of $U_k$, we have
        \begin{equation*}
            \ell_k:=\min\{\vert g(x)\vert_v:\ x\in U_k\}>0.
        \end{equation*}
        We note that $g$ is Lipschitz continuous: there exists $M>0$ such that
        $$\vert g(x)-g(y)\vert_v\leq M\vert x-y\vert_v$$
        for every $x,y\in\Z_p$. We choose a sufficiently large $t_k\in\N$ such that $\ell_k>M/p^{t_k}$. Then
        \begin{equation}\label{eq:g on Uk mod p^tk}
            \vert g(x+p^{t_k}y)-g(x)\vert_v<\vert g(x)\vert_v\quad\text{hence}\quad
            \vert g(x+p^{t_k}y)\vert_v=\vert g(x)\vert_v
        \end{equation}
        for every $x\in U_k$ and $y\in\Z_p$. By increasing $t_k$ if necessary, we may assume $t_k\geq k$.

        Put $d_k=p^{t_k}$ and let 
        \begin{align*}
        \cG_k&=\{0\leq j\leq d_k-1:\ j\equiv c\bmod p^k\ \text{for some $c\in\cC_k$}\}\\
        &=\{0\leq j\leq d_k-1:\ j\equiv u\bmod p^{t_k}\ \text{for some $u\in U_k$}\}.
        \end{align*}
        We have $\vert\cG_k\vert/d_k=(p^k-m)/p^k$, therefore $\lim \vert\cG_k\vert/d_k=1$. Moreover, \eqref{eq:g on Uk mod p^tk}
        gives that $f$ is constant on each congruence class $j\bmod d_k$ 
        for $j\in\cG_k$. This finishes the proof.
    \end{proof}

    \begin{theorem}\label{thm:SML variant}
    Let $K$ be a number field, $v\in M_K^0$, $c\in\Q$, and  $u_n=P_1(n)\alpha_1^n+\cdots+P_r(n)\alpha_r^n$ for $n\in\N_0$ a linear recurrence sequence defined over $K$. Suppose that $(u_n)_{n\geq 0}$ is $v$-stable and $\max\{\vert\alpha_i\vert_v:\ 1\leq i\leq r\}=1$. Then the function $f:\ \N_0\rightarrow \Qbar$ given by
    $f(n)=\vert u_n\vert_v^c$
    is an almost quasi-constant.
    \end{theorem}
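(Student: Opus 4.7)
The plan is to verify the three defining properties of Definition~\ref{def:almost q-p} for $f$, with the additional requirement that the polynomials $P_{i,j}$ in (P3) are constant. Property (P1) is immediate: the absolute value $\vert u_n\vert_v$ lies in $p^{\Q}$, where $p$ is the rational prime below $v$, so each $f(n) = \vert u_n\vert_v^c$ lies in a fixed finite extension of $\Q$ depending only on $v$ and $c$.

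For (P2), the ultrametric inequality together with $\vert\alpha_i\vert_v \le 1$ and the uniform boundedness of the $v$-adic absolute values of the coefficients of the $P_i$'s yields an upper bound $\vert u_n\vert_v \le C$. On the other hand, Corollary~\ref{cor:characterization} applied with $M=1$ and the $v$-stability hypothesis gives $\vert u_n\vert_v \ge B^n$ for every $B \in (0,1)$ and all sufficiently large $n$. These two bounds force $\bigl|\log_p \vert u_n\vert_v\bigr| = o(n)$, and standard properties of the Weil height for numbers of the form $p^q$ then yield $h(f(n)) = o(n)$.

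The heart of the argument is (P3). First I would reduce to the case $\vert\alpha_i\vert_v = 1$ for every $i$: by the ultrametric property and the lower bound $\vert u_{v,n}\vert_v \ge B^n$ for $B < 1$ coming from Corollary~\ref{cor:characterization}, the quantities $\vert u_n\vert_v$ and $\vert u_{v,n}\vert_v$ agree for all large $n$, so only the essential $v$-part matters. By Lemma~\ref{lem:Lm+b}, it then suffices to exhibit an $N \in \N$ such that, for each $b \in \{0,\ldots,N-1\}$, the function $m \mapsto f(Nm+b)$ is an almost quasi-constant. I would choose $N$ so large that every $\alpha_i^N$ lies in a principal unit subgroup $1 + \fp^e\cO_v$, with $e$ large enough that $\log_v$ and $\exp_v$ are mutually inverse analytic isomorphisms converging on the closed unit disk $\{a \in K_v : \vert a\vert_v \le 1\}$; this is standard because that principal unit subgroup is a finite-index, topologically pro-$p$ subgroup of $\cO_v^*$. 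The identity
$$u_{Nm+b} \;=\; \sum_{i} P_i(Nm+b)\,\alpha_i^b\,\exp_v\!\bigl(m\log_v(\alpha_i^N)\bigr)$$
then expresses its left-hand side as the value at $m$ of a power series $g_b \in K_v[[x]]$ convergent on the full disk $\{a \in K_v : \vert a\vert_v \le 1\}$. Lemma~\ref{lem:p-adic analytic} applied to $g_b$ shows that $m \mapsto \vert g_b(m)\vert_v$ satisfies (P3) with constant $P_{i,j}$; since raising to the fixed rational $c$-th power is a pointwise operation and preserves constancy on congruence classes, the same holds for $m \mapsto f(Nm+b)$. Lemma~\ref{lem:Lm+b} then reassembles these pieces into (P3) for $f$ itself.

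The main obstacle will be the careful setup in that last step: choosing $N$ and the exponent $e$ so that every $\alpha_i^N$ lands simultaneously in a common domain on which the $v$-adic logarithm and exponential are defined, analytic, and inverse to one another, and so that the resulting series has radius of convergence covering the full unit disk of $K_v$ (not merely $\Z_p$). Once this is in place, everything reduces cleanly to Lemmas~\ref{lem:Lm+b} and~\ref{lem:p-adic analytic}.
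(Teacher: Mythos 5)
Your proposal is correct and follows essentially the same route as the paper: establish (P1) and (P2) via Corollary~\ref{cor:characterization}, reduce to the essential $v$-part so that all $\vert\alpha_i\vert_v=1$, interpolate $u_{Nm+b}$ by $v$-adic analytic functions on the unit disk, and conclude with Lemma~\ref{lem:p-adic analytic} and Lemma~\ref{lem:Lm+b}. The only difference is that you spell out Skolem's $p$-adic exponential/logarithm construction explicitly, whereas the paper simply cites Skolem's method from the proof of the Skolem--Mahler--Lech theorem.
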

    \begin{proof}
        For each $n$, we have $f(n)=p^{a_n}$ where $a_n\in\Q$ and its denominator is bounded in terms of $K$, $v$, and $c$. Hence 
        all the $f(n)$'s belong to a number field of the form $\Q(p^{1/D})$. This proves Property (P1). Let $(u_{v,n})_{n\geq 0}$ be the essential $v$-part of $(u_n)_{n\geq 0}$ as in Definition~\ref{def:v-stable}. For any $B\in (0,1)$, we have:
        $$B^n<\vert u_{v,n}\vert_v=\vert u_n\vert_v<(1/B)^n$$
        for all sufficiently large $n$. The upper bound is obvious, the lower bound
        follows from Corollary~\ref{cor:characterization}, and the middle equality
        follows from the lower bound and the ultrametric property of $v$. This implies the following:
        \begin{itemize}
            \item $a_n=o(n)$, hence $f$ satisfies Property (P2).
            \item We may work with $u_{v,n}$ instead of $u_n$. In other words we may assume $\vert\alpha_1\vert_v=\ldots=\vert\alpha_r\vert_v=1$.
        \end{itemize}

        By Skolem's method in the proof of the Skolem-Mahler-Lech theorem \cite{Sko34,Mah35,Lec53}, there exist $L\in\N$ and power series
        $g_0(x),\ldots,g_{L-1}(x)\in K_v[[x]]$ that are convergent on $\{a\in K_v:\ \vert a\vert_v\leq 1\}$ such that
        $u_{Lm+b}=g_b(m)$ for $0\leq b\leq L-1$ and $m\in \N_0$. By Lemma~\ref{lem:p-adic analytic}, the function $m\mapsto f(Lm+b)$ is an almost quasi-constant for $0\leq b\leq L-1$. We finish the proof by applying Lemma~\ref{lem:Lm+b}. 
    \end{proof}

    For the second application of Theorem~\ref{thm:main}, we have 
    the following result that implies a significant case of an open problem by Royals-Ward \cite[p.~2228]{BC18_DO} (see Remark~\ref{rem:RW}):
    \begin{corollary}\label{cor:RW}
    Let $(a_n)_{n\geq 0}$ and $(u_n)_{n\geq 0}$ be linear recurrence sequences
    defined over a number field $K$. Let $S$ be a finite subset of $M_K^0$
    and let $c=(c_v)_{v\in S}\in \Q^S$. Write
    $$\vert u_n\vert_{S,c}=\prod_{v\in S}\vert u_n\vert_v^{c_v}.$$
    Suppose that $(a_n)_{n\geq 0}$ is stable and
    $(u_n)_{n\geq 0}$ is $v$-stable for every $v\in S$.
    Then the power series $\displaystyle \sum_{n=0}^{\infty} \vert u_n\vert_{S,c}a_nx^n$ is either a rational function or admits the circle of radius of convergence as a natural boundary.
    \end{corollary}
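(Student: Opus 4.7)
The plan is to reduce this to Theorem~\ref{thm:main} applied to the series $\sum g(n) a_n' x^n$ for suitable modifications of $f(n):=|u_n|_{S,c}$ and $a_n$. The one point where real ingenuity is required is the observation that $f$ is typically \emph{not} an almost quasi-polynomial: writing $\beta_1,\ldots,\beta_s$ for the characteristic roots of $(u_n)$ and $M_v := \max_j |\beta_j|_v$ for each $v \in S$, the factor $|u_n|_v^{c_v}$ grows like the exponential $M_v^{n c_v}$, and a genuine exponential cannot be made polynomial on any infinite family of arithmetic progressions when the base differs from $1$.

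To bypass this, I would absorb the exponential into $(a_n)$. Set $C := \prod_{v \in S} M_v^{c_v}$; since each $M_v$ is a rational power of the residue characteristic of $v$, we have $C \in \Qbar^*$. Define $a_n' := C^n a_n$ and $g(n) := f(n)/C^n$, so that
\[
\sum_{n=0}^{\infty} |u_n|_{S,c}\, a_n x^n \;=\; \sum_{n=0}^{\infty} g(n)\, a_n'\, x^n.
\]
The sequence $(a_n')$ is a linear recurrence with characteristic roots $\{C\alpha_i\}$, where $(\alpha_i)$ are those of $(a_n)$; multiplication by the nonzero constant $C^n$ preserves the indices realizing the maximum archimedean absolute value and multiplies the archimedean essential part of any arithmetic subprogression by a nonzero factor, so the stability of $(a_n)$ transfers to $(a_n')$.

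It then remains to verify that $g(n) = \prod_{v \in S} (|u_n|_v/M_v^n)^{c_v}$ is an almost quasi-constant. By the algebra closure of almost quasi-constants noted in Example~\ref{eg:|n|_p}, it suffices to treat each factor separately. For each $v \in S$ pick $\mu_v \in K^*$ with $|\mu_v|_v = M_v$ (for instance $\mu_v = \beta_j$ for any $j$ with $|\beta_j|_v = M_v$) and consider $\tilde{u}_{v,n} := u_n/\mu_v^n$, a linear recurrence over $K$ whose characteristic roots $\{\beta_j/\mu_v\}$ have maximum $v$-absolute value exactly $1$; since its essential $v$-part is $\mu_v^{-n}$ times that of $(u_n)$, the $v$-stability of $(u_n)$ is inherited. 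Theorem~\ref{thm:SML variant} then shows that $n \mapsto |\tilde{u}_{v,n}|_v^{c_v} = (|u_n|_v/M_v^n)^{c_v}$ is an almost quasi-constant. A final application of Theorem~\ref{thm:main} to $\sum g(n) a_n' x^n$ yields the desired dichotomy for the original series.
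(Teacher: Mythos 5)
Your proposal is correct and follows essentially the same route as the paper: absorb the exponential factor $M^n=\prod_{v\in S}M_v^{nc_v}$ into the stable rational series, show each factor $\vert u_n\vert_v^{c_v}/M_v^{nc_v}$ is an almost quasi-constant via Theorem~\ref{thm:SML variant}, use closure under products, and apply Theorem~\ref{thm:main}. Your explicit rescaling by $\mu_v$ with $\vert\mu_v\vert_v=M_v$ just spells out the normalization that the paper leaves implicit when invoking Theorem~\ref{thm:SML variant}.
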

    \begin{proof}
        Recall that our definition of linear recurrence sequences in this paper excludes the zero sequence. Let $R\in (0,\infty)$ be the radius of convergence of $\sum a_nx^n$. 
        Write $u_n=P_1(n)\alpha_1^n+\cdots+P_r(n)\alpha_r^n$ as before. For $v\in S$, let $M_v=\max_{1\leq i\leq r}\vert\alpha_i\vert_v$. Let $M=\prod_{v\in S} M_v^{c_v}$. By Theorem~\ref{thm:SML variant}, the function
        $n\mapsto \vert u_n\vert_v^{c_v}/M_v^{nc_v}$ is an almost quasi-constant.
        Therefore, the function
        $$n\mapsto \vert u_n\vert_{S,c}/M^n$$
        is an almost quasi-constant. By applying Theorem~\ref{thm:main} (and Remark~\ref{rem:radius}) to the power series $\sum a_nM^nx^n$, we have that
        $\sum \vert u_n\vert_{S,c}a_nx^n$ has radius of convergence $R/M$ and it is either a rational function or admits the circle of radius $R/M$ as a natural boundary.
    \end{proof}

    \begin{remark}\label{rem:RW}
    As explained in Example~\ref{eg:non-degenerate is stable}, many linear recurrence sequences including the non-degenerate ones and those of the form
    $\prod_{i=1}^m(\xi_i^n-1)$ are $v$-stable for every $v$. For such sequences $(u_n)_{n\geq 0}$, Corollary~\ref{cor:RW} 
    implies that 
    $\sum \vert u_n\vert_{S,c}u_nx^n$ satisfies the P\'olya-Carlson dichotomy
    for every finite set $S$ and tuple $c\in \Q^S$. This addresses the open problem by Royals-Ward \cite[p.~2228]{BC18_DO}.
    \end{remark}
    
    We conclude this paper with the solution of an open problem by Bell-Miles-Ward \cite[p.~664]{BMW14_TA}:
    \begin{theorem}\label{thm:BMW}
        Let $k\in\N$. For $1\leq j\leq k$, let $K_j$ be a number field, let $\xi_j\in K_j$ that is not a root of unity, and let $S_j$ be a finite (possibly empty) subset 
        of $M_{K_j}^0$. 
        For $1\leq j\leq k$ and $n\in\N$, put $c_{j,n}=\displaystyle\prod_{v\in M_{K_j}^{\infty}\cup S_j}\vert\xi_j^n-1\vert_v^{\delta_v}$.
        Then the power series
        $$Z(x):=\exp\left(\sum_{n=1}^{\infty} \prod_{j=1}^k c_{j,n}\frac{x^n}{n}\right)$$
        is either rational or has a natural boundary at its circle of convergence, and the latter occurs if and only if there exist $1\leq j\leq k$ and $v\in S_j$ such that
        $\vert\xi_j\vert_v=1$.
    \end{theorem}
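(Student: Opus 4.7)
The plan is to reduce $Z(x)$ to a series of the form treated by Theorem~\ref{thm:main}. By \cite[Lemma~1]{BMW14_TA}, $Z(x)$ is rational, respectively admits its circle of convergence as a natural boundary, if and only if the series $\displaystyle\sum_{n\geq 1} N_n\,x^n$ with $N_n:=\prod_{j=1}^k c_{j,n}$ is. I decompose $c_{j,n}=A_{j,n}\,B_{j,n}$ into archimedean factor $A_{j,n}:=\prod_{v\in M_{K_j}^\infty}|\xi_j^n-1|_v^{\delta_v}$ and non-archimedean factor $B_{j,n}:=\prod_{v\in S_j}|\xi_j^n-1|_v^{\delta_v}$. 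Expanding $A_{j,n}$ through the complex embeddings $\sigma\colon K_j\hookrightarrow\C$ gives $A_{j,n}=\prod_\sigma|\sigma(\xi_j)^n-1|=|N_{K_j/\Q}(\xi_j^n-1)|$, so setting $a_n^*:=\prod_{j=1}^k N_{K_j/\Q}(\xi_j^n-1)$ yields $\prod_j A_{j,n}=\varepsilon_n\,a_n^*$, where $\varepsilon_n\in\{\pm 1\}$ depends only on $n\bmod 2$ (complex conjugate pairs contribute positive factors, while real embeddings contribute signs that are either $n$-independent or alternate with parity). The sequence $a_n^*$ is a product of the linear recurrence sequences $(\sigma(\xi_j)^n-1)$ whose characteristic roots are not roots of unity, hence is stable by Example~\ref{eg:non-degenerate is stable}.

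For each $v\in S_j$, the sequence $\xi_j^n-1$ is $v$-stable, and after factoring out the dominant geometric factor one obtains $|\xi_j^n-1|_v^{\delta_v}=\max(|\xi_j|_v,1)^{n\delta_v}\,\psi_{j,n,v}$, where $\psi_{j,n,v}$ is the $v$-adic absolute value (raised to the power $\delta_v$) of a $v$-stable linear recurrence sequence with characteristic roots of maximum $v$-absolute value equal to $1$. By Theorem~\ref{thm:SML variant} each $\psi_{j,n,v}$ is an almost quasi-constant. Setting $\mu:=\prod_{j,v\in S_j}\max(|\xi_j|_v,1)^{\delta_v}$ and $\phi_n:=\prod_{j,v\in S_j}\psi_{j,n,v}$, one obtains $N_n=(\varepsilon_n\phi_n)\cdot(\mu^n a_n^*)$. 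Since $\sum(\mu^n a_n^*)\,x^n$ is a stable rational non-polynomial series and $\varepsilon_n\phi_n$ is an almost quasi-polynomial (product of a quasi-constant and an almost quasi-constant), Theorem~\ref{thm:main} gives that $\sum N_n x^n$ is either rational or admits its circle of convergence as a natural boundary.

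To characterize the rational case, by Theorem~\ref{thm:main} together with Lemma~\ref{lem:q-p and almost q-c}, $\sum N_n x^n$ is rational if and only if $\varepsilon_n\phi_n$ is a quasi-constant, equivalently if and only if $\phi_n$ is. If $|\xi_j|_v\ne 1$ for every $j$ and every $v\in S_j$, then by ultrametricity $\psi_{j,n,v}=1$ for all sufficiently large $n$, so $\phi_n$ is eventually $1$ and we are in the rational case. Conversely, suppose some $(j_0,v_0)$ with $v_0\in S_{j_0}$ satisfies $|\xi_{j_0}|_{v_0}=1$, and let $p$ be the rational prime below $v_0$. For large $n$, each factor $|\xi_j^n-1|_v^{\delta_v}\in(0,1]$ contributing to $\phi_n$ (with $v\in S_j$, $v\mid p$, $|\xi_j|_v=1$) has non-positive $p$-adic valuation. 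Compactness of $\cO_{K_{j_0},v_0}^*$ together with the fact that $\xi_{j_0}$ is not a root of unity yields a subsequence along which $v_{v_0}(\xi_{j_0}^n-1)\to\infty$, so $v_p(\phi_n)$ is unbounded below and $\phi_n$ takes infinitely many values. Thus $\phi_n$ cannot be a quasi-constant. I expect the main obstacle to be precisely this final characterization step, namely ruling out cancellations when several bad places lie above the same rational prime; the non-negativity of the contributing valuations within a single prime, together with independence of distinct $p$-adic valuations across primes, handles this.
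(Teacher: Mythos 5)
Your treatment of the coefficient series $\sum_{n\ge 1} N_n x^n$ is essentially the paper's own argument: the same split of $c_{j,n}$ into its archimedean part (a stable, rational, non-polynomial exponential sum) and its part over $S_j$, the same normalization by $\prod_{j,v}\max(|\xi_j|_v,1)^{n\delta_v}$ so that Theorem~\ref{thm:SML variant} makes the finite part an almost quasi-constant, Theorem~\ref{thm:main} for the dichotomy, and Lemma~\ref{lem:q-p and almost q-c} plus a $\liminf$-goes-to-zero argument (compactness of the local unit group) for the characterization of the rational case. Your worry about cancellation among several places above one prime is not an issue, and your valuation bookkeeping is more elaborate than needed: every normalized factor lies in $(0,1]$, so as soon as one factor tends to $0$ along a subsequence the whole product does, which already rules out quasi-constancy; this is exactly how the paper argues.

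The genuine gap is the opening reduction, where you invoke \cite[Lemma~1]{BMW14_TA} to transfer \emph{rationality} in both directions between $Z(x)$ and $\sum N_n x^n$. The relation $F(x)=xZ'(x)/Z(x)$ (which is what underlies that lemma, and what the paper uses) transfers the analytic-continuation/natural-boundary property, but no general statement can transfer rationality from $F$ to $Z$: for $a_n=n$ one has $\sum a_nx^n=x/(1-x)^2$ rational with non-negative integer coefficients, yet $\exp\left(\sum_{n\ge1} a_nx^n/n\right)=e^{x/(1-x)}$ is neither rational nor has a natural boundary. So proving the dichotomy for $\sum N_nx^n$ does not by itself yield it for $Z$; in the rational case one must use the special shape of the coefficients. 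This is precisely why the paper records \eqref{eq:the a_i's are integers}: when $|\xi_j|_v\ne 1$ for all $j$ and $v\in S_j$, one has $N_n=\varepsilon_n a_n^*\mu^n=\sum_i a_i\beta_i^n$ with \emph{integer} $a_i$ and no polynomial-in-$n$ parts, whence $Z(x)=\prod_i(1-\beta_i x)^{-a_i}$ is rational. Your own computation $\prod_jA_{j,n}=\varepsilon_n a_n^*$ with $a_n^*=\prod_j\Norm_{K_j/\Q}(\xi_j^n-1)\in\Z$ and $\varepsilon_n$ depending only on parity supplies exactly these ingredients (expanding the norms gives coefficients $\pm1$, and absorbing $\varepsilon_n$ only negates characteristic roots), but you never carry out this step, so the rational half of the dichotomy for $Z$ is left unproved as written.
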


    \begin{remark}\label{rem:BMW}
        This is \cite[Theorem~15]{BMW14_TA} without the extra condition that 
        $\vert \xi_j\vert_v\neq 1$ for every $1\leq j\leq k$ and $v\in M_{K_j}^{\infty}$. We remind the reader that the appearance of 
        $\delta_v:=[K_v:\Q_p]$ (where $v$ restricts to $p$ on $M_{\Q}$) in Theorem~\ref{thm:BMW} is due to the different normalization
        of $\vert\cdot\vert_v$ as explained in Remark~\ref{rem:normalization}. In
        \cite[p.~664]{BMW14_TA}, the authors remark that the above extra condition is essential in their proof of \cite[Theorem~15]{BMW14_TA} and they suggest the problem of removing this condition. Theorem~\ref{thm:BMW} solves this problem. The significance of the $c_{j,n}$'s and their product is that they are the number of periodic points of a certain automorphism on compact abelian groups, see
        \cite{CEW97,Mil08_PP,BMW14_TA}.
    \end{remark}

    \begin{proof}[Proof of Theorem~\ref{thm:BMW}]
        First, we prove the theorem
        when $Z(x)$ is replaced by
        $$F(x):=\sum_{n=1}^{\infty} \prod_{j=1}^k c_{j,n}x^n.$$
        Put $c_{j,n}^{\infty}=\displaystyle\prod_{v\in M_{K_j}^{\infty}} \vert\xi_j^n-1\vert_v^{\delta_v}$ and $c_{j,n}^0=\displaystyle\prod_{v\in S_j} \vert\xi_j^n-1\vert_v^{\delta_v}$.

        First, it is easy to show that $\displaystyle\sum_{n=1}^{\infty} \prod_{j=1}^k c_{j,n}^{\infty}x^n$ is a rational function. We include the proof for the convenience of the reader and for later use. We rewrite:
        \begin{equation}\label{eq:rewrite c_j,n,infty}
        c_{j,n}^{\infty}=\prod_{\sigma:\ K_j\rightarrow\C} \vert \sigma(\xi_j)^n-1\vert.
        \end{equation}
        For a pair of complex-conjugate embeddings $\sigma$ and $\bar{\sigma}$, we have:
        $$\vert \sigma(\xi_j)^n-1\vert\cdot\vert\bar{\sigma}(\xi_j)^n-1\vert=(\sigma(\xi_j)^n-1)\cdot(\bar{\sigma}(\xi_j)^n-1).$$
        For a real embedding $\sigma$, first consider the case $\sigma(\xi_j)>1$ and we obviously have:
        $$\vert\sigma(\xi_j)^n-1\vert=\sigma(\xi_j)^n-1.$$
        Then consider the case $\sigma(\xi_j)<-1$, we have:
        $$\vert\sigma(\xi_j)^n-1\vert = \begin{cases}
  1-\sigma(\xi_j)^n  & \text{if $n$ is odd} \\
  \sigma(\xi_j)^n-1  & \text{if $n$ is even}
\end{cases}$$
        There are similar expressions in the cases $\sigma(\xi_j)\in (-1,0)$ and
        $\sigma(\xi_j)\in (0,1)$. By using these expressions to expand the RHS of \eqref{eq:rewrite c_j,n,infty} then multiplying the $c_{j,n}^{\infty}$'s for $1\leq j\leq k$ and applying Proposition~\ref{prop:KMN}, we conclude that the power series
        $$\sum_{n=1}^{\infty}\prod_{j=1}^kc_{j,n}^{\infty}x^n$$
        represents a rational function with radius of convergence
        $$R=\left(\prod_{j=1}^k\prod_{v\in M_{K_j}^{\infty}}\max\{\vert\xi_j\vert_v^{\delta_v},1\}\right)^{-1},$$
        it is stable, and for $n\geq 1$ the coefficients $\displaystyle\prod_{j=1}^kc_{j,n}^{\infty}x^n$ 
        have the form:
        \begin{equation}\label{eq:the a_i's are integers}
		\prod_{j=1}^kc_{j,n}^{\infty}=a_1\alpha_1^n+\cdots+a_r\alpha_r^n,	        
        \end{equation}
        where the $\alpha_i$'s are distinct non-zero algebraic numbers and the 
        $a_i$'s are non-zero \emph{integers}.

        Now we proceed as in the proof of Corollary~\ref{cor:RW}. For $1\leq j\leq k$ and $v\in S_j$, put $M_{j,v}=\max\{\vert\xi_j\vert_v,1\}$.
        Then put $M=\displaystyle\prod_{j=1}^k\prod_{v\in S_j} M_{j,v}^{\delta_v}$.
        Theorem~\ref{thm:SML variant} implies that the function
        $n\mapsto \vert\xi_j^n-1\vert_v^{\delta_v}/M_{j,v}^{n\delta_v}$ is an almost quasi-constant. Therefore the function
        $$g(n):= \left(\prod_{j=1}^k c_{j,n}^0\right)/M^n$$
        is an almost quasi-constant. By applying Theorem~\ref{thm:main} (and Remark~\ref{rem:radius}) for this function and the power series
        $\displaystyle\sum_{n=1}^{\infty}\prod_{j=1}^kc_{j,n}^{\infty}M^nx^n$, we conclude that $F(x)$ has radius of convergence $R/M$ and it is either a rational function or admits the circle of radius $R/M$ as a natural boundary. 
        
        By the last assertion of Theorem~\ref{thm:main} and Lemma~\ref{lem:q-p and almost q-c}, the latter happens if and only if the above function $g(n)$ is
        a quasi-constant. If $\vert\xi_j\vert_v\neq 1$ for $1\leq j\leq k$ and $v\in S_j$ then $\vert\xi_j^n-1\vert_v=M_{j,v}^n$ for every $n$, hence $g\equiv 1$. On the other hand, suppose $\vert\xi_{\tilde{j}}\vert_{\tilde{v}}=1$ for some
        $1\leq\tilde{j}\leq k$ and $\tilde{v}\in S_{\tilde{j}}$. We have $\liminf \vert\xi_{\tilde{j}}^n-1\vert_{\tilde{v}}=0$ by 
        choosing $n$ such that $\xi_{\tilde{j}}^n-1$ is divisible by arbitrarily high power of the prime ideal corresponding $\tilde{v}$. And since 
        each function $n\mapsto \vert\xi_j^n-1\vert_v^{\delta_v}/M_{j,v}^{n\delta_v}$ is bounded from above, we have
        $\liminf g(n)=0$. Therefore the function $g$ cannot be a quasi-constant. This proves the conclusion of the theorem for $F(x)$. 

        From the definition of $Z(x)$ and $F(x)$ and the fact that $F(x)$ has radius of convergence $R/M$, we have that $Z(x)$ converges in the open disk $D(0,R/M)$. The relation $\displaystyle F(x)=\frac{xZ'(x)}{Z(x)}$
        implies that $F(x)$ can be extended to an analytic function on a connected open set strictly containing $D(0,R/M)$ \emph{if and only if} the same holds for $Z(x)$.
        When $F(x)$ admits the circle of radius $R/M$ as a natural boundary, the same holds for $Z(x)$ (and its radius of convergence must be $R/M$).
        When $F(x)$ represents a rational function then we have
        $\vert \xi_j\vert_v=1$ for every $1\leq j\leq k$ and $v\in S_j$, hence 
        $g\equiv 1$, and 
        $$F(x)=\sum_{n=1}^{\infty}\prod_{j=1}^kc_{j,n}^{\infty}M^nx^n$$
        where the coefficients $\displaystyle\prod_{j=1}^kc_{j,n}^{\infty}$ have the special form
        in \eqref{eq:the a_i's are integers}. We emphasize the fact that the $a_i$'s in \eqref{eq:the a_i's are integers} are integers. Then a standard 
        algebraic manipulation gives that $Z(x)$ is a rational function. This proves the conclusion for $Z(x)$.
    \end{proof}
    
	\bibliographystyle{amsalpha}
	\bibliography{Perturbation} 	

\def\cprime{$'$} \def\cprime{$'$} \def\cprime{$'$} \def\cprime{$'$}
\providecommand{\bysame}{\leavevmode\hbox to3em{\hrulefill}\thinspace}
\providecommand{\MR}{\relax\ifhmode\unskip\space\fi MR }
% \MRhref is called by the amsart/book/proc definition of \MR.
\providecommand{\MRhref}[2]{%
  \href{http://www.ams.org/mathscinet-getitem?mr=#1}{#2}
}
\providecommand{\href}[2]{#2}
\begin{thebibliography}{BGNS23}

\bibitem[AM65]{AM65_OP}
M.~Artin and B.~Mazur, \emph{On periodic points}, Ann. of Math. (2) \textbf{81}
  (1965), 82--99.

\bibitem[BC18]{BC18_DO}
J.~Byszewski and G.~Cornelissen, \emph{Dynamics on abelian varieties in
  positive characteristic, with an appendix by {R.~Royals and T.~Ward}},
  Algebra Number Theory \textbf{12} (2018), 2185--2235.

\bibitem[BCH]{BCH}
J.~Byszewski, G.~Cornelissen, and M.~Houben, \emph{Dynamics of endomorphisms of
  algebraic groups and related systems}, version on September 2, 2022.
  arXiv:2209.00085.

\bibitem[BG06]{BG06_HI}
E.~Bombieri and W.~Gubler, \emph{Heights in {D}iophantine geometry}, New
  Mathematical Monographs, vol.~4, Cambridge University Press, Cambridge, 2006.

\bibitem[BGNS23]{BGNS23_AG}
J.~P. Bell, K.~Gunn, K.~D. Nguyen, and J.~C. Saunders, \emph{A general
  criterion for the {P\'olya-Carlson} dichotomy and application}, Trans. Amer.
  Math. Soc. \textbf{376} (2023), 4361--4382.

\bibitem[BMW14]{BMW14_TA}
J.~Bell, R.~Miles, and T.~Ward, \emph{Towards a {P}\'olya–{C}arlson dichotomy
  for algebraic dynamics}, Indag. Math. (N.S.) \textbf{25} (2014), 652--668.

\bibitem[BNZ20]{BNZ20_DF}
J.~P. Bell, K.~D. Nguyen, and U.~Zannier, \emph{D-finiteness, rationality, and
  height}, Trans. Amer. Math. Soc. \textbf{373} (2020), 4889--4906.

\bibitem[BNZ23]{BNZ23_DF2}
\bysame, \emph{D-finiteness, rationality, and height {II}: lower bounds over a
  set of positive density}, Adv. Math. \textbf{414} (2023), Article 108859.

\bibitem[CEW97]{CEW97}
V.~Chothi, G.~Everest, and T.~Ward, \emph{{$S$}-integer dynamical systems:
  periodic points}, J. Reine Angew. Math. \textbf{489} (1997), 99--132.

\bibitem[CZ04]{CZ04_OT}
P.~Corvaja and U.~Zannier, \emph{On the rational approximations to the powers
  of an algebraic number: {S}olution of two problems of {M}ahler and {M}end\`es
  {F}rance}, Acta Math. \textbf{193} (2004), 175--191.

\bibitem[ES02]{ES02_AQ}
J.-H. Evertse and H.~P. Schlickewei, \emph{A quantitative version of the
  {A}bsolute {S}ubspace {T}heorem}, J. Reine Angew Math. \textbf{548} (2002),
  21--127.

\bibitem[Eve84]{Eve84_OS}
J.-H. Evertse, \emph{On sums of {$S$}-units and linear recurrences}, Compos.
  Math. \textbf{53} (1984), 225--244.

\bibitem[Fur67]{Fur67}
H.~Furstenberg, \emph{Disjointness in ergodic theory, minimal sets, and a
  problem in diophantine approximation}, Math. Sys. Theory \textbf{1} (1967),
  1--49.

\bibitem[KMN19]{KMN19_AA}
A.~Kulkarni, N.~M. Mavraki, and K.~D. Nguyen, \emph{Algebraic approximations to
  linear combinations of powers: an extension of results by {M}ahler and
  {C}orvaja-{Z}annier}, Trans. Amer. Math. Soc. \textbf{371} (2019),
  3787--3804.

\bibitem[Lec53]{Lec53}
C.~Lech, \emph{A note on recurring series}, Ark. Mat. \textbf{2} (1953),
  417--421.

\bibitem[Mah35]{Mah35}
K.~Mahler, \emph{{Eine arithmetische Eigenschaft der Taylor-Koeffizienten
  rationaler Funktionen}}, Proc. Akad. Wet. Amsterdam \textbf{38} (1935),
  50--60.

\bibitem[Mil08]{Mil08_PP}
R.~Miles, \emph{Periodic points of endomorphisms on solenoids and related
  groups}, Bull. Lond. Math. Soc. \textbf{40} (2008), 696--704.

\bibitem[Sch70]{Sch70_SA}
W.~M. Schmidt, \emph{Simultaneous approximation to algebraic numbers by
  rationals}, Acta Math. \textbf{125} (1970), 189--201.

\bibitem[Sko34]{Sko34}
T.~Skolem, \emph{{E}in {V}erfahren zur {B}ehandlung gewisser exponentialer
  {G}leichungen und diophantischer {G}leichungen}, Comptes Rendus Congr. Math.
  Scand. (Stockholm, 1934) (1934), 163--188.

\bibitem[Sma67]{Sma67_DF}
S.~Smale, \emph{Differentiable dynamical systems}, Bull. Amer. Math. Soc.
  \textbf{73} (1967), 747--817.

\bibitem[Sta12]{Sta12_EC1}
R.~Stanley, \emph{Enumerative combinatorics, volume 1}, second ed., Cambridge
  Studies in Advanced Math., vol.~49, Cambridge University Press, Cambridge,
  2012.

\bibitem[Str28]{Strassmann28}
R.~Stra{\ss}mann, \emph{{\"U}ber den {W}ertevorrat von {P}otenzreihen im
  {G}ebiet der $p$-adischen {Z}ahlen}, J. Reine Angew. Math. \textbf{159}
  (1928), 13--28.

\bibitem[Voj87]{Voj87}
P.~Vojta, \emph{Diophantine approximations and value distribution theory},
  Lecture Notes in Math, vol. 1239, Springer-Verlag, 1987.

\end{thebibliography}
\end{document}